\documentclass{article}

\usepackage{amsmath}
\usepackage{fullpage}
\usepackage[colorlinks,linkcolor=blue,citecolor=blue]{hyperref}

\usepackage{enumerate}
\usepackage{amsfonts}

\newcommand{\betadown}{\beta_{\downarrow}}
\newcommand{\wdown}{w_{\downarrow}}
\newcommand{\Gdown}{G_{\downarrow}}
\newcommand{\N}{\mathcal N}
\newcommand{\NS}{\mathcal N_S}

\usepackage{amsthm}
\newtheorem{thm}{Theorem}
\newtheorem{lem}[thm]{Lemma}
\newtheorem{cor}[thm]{Corollary}
\theoremstyle{definition}
\newtheorem{rmk}[thm]{Remark}

\title{A $\{-1,0,1\}$- and sparsest basis for the null space of a forest in optimal time}

\author{Daniel A.\ Jaume\footnote{Departamento de Matem\'atica, Universidad Nacional del San Luis, San Luis, Argentina. E-mail addresses: \texttt{djaume@unsl.edu.ar} (D.A.\ Jaume), \texttt{lgmolina@unsl.edu.ar} (G.\ Molina), and \texttt{agpastine@unsl.edu.ar} (A.\ Pastine)} \and Gonzalo Molina$^*$ \and Adri\'an Pastine$^*$ \and Mart\'in D.\ Safe\footnote{Departamento de Matem\'atica, Universidad Nacional del Sur, Bah\'ia Blanca, Argentina. E-mail address: \texttt{msafe@uns.edu.ar}~(M.D.\ Safe)}}

\begin{document}

\maketitle

\begin{abstract} Given a matrix, the \textsc{Null Space Problem} asks for a basis of its null space having the fewest nonzeros. This problem is known to be NP-complete and even hard to approximate. The null space of a forest is the null space of its adjacency matrix. Sander and Sander~(2005) and Akbari et al.~(2006), independently, proved that the null space of each forest admits a $\{-1,0,1\}$-basis. We devise an algorithm for determining a sparsest basis of the null space of any given forest which, in addition, is a $\{-1,0,1\}$-basis. Our algorithm is time-optimal in the sense that it takes time at most proportional to the number of nonzeros in any sparsest basis of the null space of the input forest. Moreover, we show that, given a forest $F$ on $n$ vertices, the set of those vertices $x$ for which there is a vector in the null space of $F$ that is nonzero at $x$ and the number of nonzeros in any sparsest basis of the null space of $F$ can be found in $O(n)$ time.\end{abstract}

\section{Introduction}

Given a matrix, the \textsc{Null Space Problem}~\cite{colemanPhD} asks for a basis of its null space which is \emph{sparsest} (i.e., has the fewest nonzeros). This problem is known to be NP-complete~\cite{MR857589} and even hard to approximate~\cite{MR3537025}. Some heuristics for solving this problem were proposed in~\cite{MR812616,MR918058,MR897742,MR3537025}. The \emph{null space} of a forest $F$, denoted $\N(F)$, is the null space of its adjacency matrix. A \emph{null basis} of $F$ is a basis of $\N(F)$. Sander and Sander~\cite{MR2154182} and Akbari et al.~\cite{MR2214403}, independently, proved that the null space of each forest admits a $\{-1,0,1\}$-basis (i.e., a basis whose entries are $-1$, $0$, and $1$ only). Moreover, algorithms for finding one such basis for any given forest were also devised in \cite{MR2214403,1709.03865,MR2154182}, but the basis produced by these algorithms are not necessarily sparsest.

Our main result is a combinatorial algorithm for producing a sparsest basis of the null space of any given forest which, in addition, is a $\{-1,0,1\}$-basis. Moreover, our algorithm is time-optimal in the sense that it takes time at most proportional to the number of nonzeros in any sparsest basis of the null space of the input forest.

This work is organized as follows. In Section~\ref{sec:prel}, we give some basic definitions and preliminaries. In Section~\ref{sec:-1,0,1}, we give an algorithm for producing a $\{-1,0,1\}$-basis of a forest in time at most proportional to the number of nonzeros in the output basis and another one that, given a forest $F$ on $n$ vertices, finds in $O(n)$ time the set of vertices $x$ of $F$ for which there is some vector in the null space of $F$ that is nonzero at $x$. In Section~\ref{sec:sparsest}, we give our time-optimal algorithm for producing a $\{-1,0,1\}$- and sparsest null basis of any given forest and show that, given a forest $F$ on $n$ vertices, the number of nonzeros in any sparsest basis of $F$ can be found in $O(n)$ time.

\section{Preliminaries}\label{sec:prel}

All graphs in this work are finite, undirected and with neither loops nor multiple edges. For all graph-theoretic notions not defined here, the reader is referred to~\cite{MR1367739}. For each set $X$, $\vert X\vert$ denotes its cardinality.

Let $G$ be a graph. We denote by $V(G)$ and $E(G)$ its vertex and edge set, respectively. If $x\in V(G)$, we denote by $N_G(x)$ the set of vertices adjacent to $x$ in $G$. If $f:X\to\mathbb R$ is any function and $Y\subseteq X$, we denote by $f(Y)$ the value $\sum_{y\in Y}f(y)$. If $G$ is a graph, we regard each vector $z\in\N(G)$ as a function $z:V(G)\to\mathbb R$ such that $z(N_G(x))=0$ for each $x\in V(G)$.

A \emph{stable set} of $G$ is a set of pairwise nonadjacent vertices of $G$. The \emph{components} (or \emph{connected components}) of $G$ are the maximal connected subgraphs of $G$. If $X\subseteq G$, the \emph{subgraph of $G$ induced by $X$} is the graph that arises from $G$ by removing all the vertices not in $X$. The \emph{length} of a path is its number of edges. A vertex $u$ is \emph{reachable} from a vertex $v$ by a path $P$ if $u$ and $v$ are the endpoints of $P$.

A \emph{matching} of $G$ is a set of pairwise vertex-disjoint edges of $G$. Let $M$ be a matching of $G$. An \emph{$M$-alternating path} of $G$ is a path of $G$ that alternates between edges in $M$ and edges not in $M$. A vertex of $G$ is \emph{$M$-saturated} if it is an endpoint of some edge of $M$, and \emph{$M$-unsaturated} otherwise. An $M$-\emph{augmenting path} is an $M$-alternating path whose both endpoints are $M$-unsaturated. Maximum matchings are characterized as follows.

\begin{thm}[\cite{MR0094811}]\label{thm:Berge} A matching $M$ of a graph $G$ is maximum if and only if $G$ has no $M$-augmenting paths.
\end{thm}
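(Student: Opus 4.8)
The plan is to prove both implications by the standard symmetric-difference argument, working with the set $M\mathbin{\triangle}E(P)$ (or $M\mathbin{\triangle}M'$) and exploiting the fact that symmetric differences of two matchings have a very restricted structure.

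For the forward implication, I would argue by contraposition: assume $G$ has an $M$-augmenting path $P$ and show $M$ is not maximum. Since $P$ is $M$-alternating with both endpoints $M$-unsaturated, its first and last edges are not in $M$, so along $P$ the edges not in $M$ outnumber those in $M$ by exactly one. Set $M' = M\mathbin{\triangle}E(P)$. I would then check that $M'$ is a matching: each internal vertex of $P$ is incident with exactly one edge of $P\cap M$ and one of $P\setminus M$, so after the swap it is still incident with exactly one edge of $M'$ (now the formerly-unmatched $P$-edge); each endpoint of $P$ gains exactly one incident edge; and vertices off $P$ are untouched. Hence $|M'| = |M| + 1$, so $M$ is not maximum.

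For the reverse implication, again by contraposition: assume $M$ is not maximum, fix a matching $M'$ with $|M'| > |M|$, and consider the subgraph $H$ of $G$ on vertex set $V(G)$ with edge set $M\mathbin{\triangle}M'$. Every vertex of $H$ is incident with at most one edge of $M$ and at most one of $M'$, hence has degree at most $2$ in $H$, so each component of $H$ is a path or a cycle whose edges alternate between $M$ and $M'$; in particular every cycle component has equally many $M$-edges and $M'$-edges. Since $|M'\setminus M| > |M\setminus M'|$, some component $P$ of $H$ must contain strictly more $M'$-edges than $M$-edges, and such a $P$ can only be a path whose first and last edges both lie in $M'$. The endpoints of $P$ are then not incident with any $M$-edge of $P$; and because they already have degree one in $M'$, they cannot be incident with any further $M$-edge outside $P$ either, so they are $M$-unsaturated. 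Thus $P$ is an $M$-augmenting path, completing the contrapositive.

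The only delicate point I anticipate is the bookkeeping in the second part: making sure that "more $M'$-edges than $M$-edges overall forces some single component to have that same imbalance" is stated cleanly (it follows since cycles contribute zero imbalance and the total imbalance is positive), and confirming that the endpoints of the offending path are genuinely $M$-unsaturated rather than merely not matched along $P$. Once the structure of $M\mathbin{\triangle}M'$ as a disjoint union of alternating paths and even cycles is established, both directions are short.
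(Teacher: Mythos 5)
The paper states Theorem~\ref{thm:Berge} as a cited result of Berge (1957) and gives no proof of it, so there is nothing in the paper to compare your argument against. Your proof is the standard symmetric-difference argument and is essentially correct. One point to tighten in the reverse direction: when you argue that an endpoint $v$ of the offending path component $P$ is $M$-unsaturated, the justification ``because they already have degree one in $M'$'' only excludes an incident $M$-edge that also lies in $M'$. You must separately exclude an incident edge of $M\setminus M'$: any such edge would belong to $M\mathbin{\triangle}M'=E(H)$ and hence would give $v$ degree at least two in $H$, contradicting that $v$ is an endpoint of the path component $P$. With both sub-cases spelled out, the conclusion that $v$ is $M$-unsaturated is fully justified and the proof is complete.
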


The dimension of the null space of a forest is characterized as follows.

\begin{thm}[\cite{MR0172271}]\label{thm:nullity} If $F$ is a forest and $M$ is a maximum matching of $F$, then $\dim\N(F)=\vert V(F)\vert-2\vert M\vert$.\end{thm}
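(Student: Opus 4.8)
The plan is to prove the identity by induction on the number of vertices of $F$. In the base case $F$ has no edges: then the adjacency matrix of $F$ is the zero matrix, so $\N(F)=\mathbb R^{V(F)}$ and $\dim\N(F)=\vert V(F)\vert$, while the empty set is the only (hence a maximum) matching of $F$, so both sides equal $\vert V(F)\vert$.

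For the inductive step, assume $F$ has an edge. Then some component of $F$ is a tree with at least two vertices and hence has a leaf; let $v$ be such a leaf, let $u$ be its unique neighbour, and set $F'=F-\{u,v\}$. The key observation is that every $z\in\N(F)$ has $z(u)=z(N_F(v))=0$; the equation $z(N_F(u))=0$ then forces $z(v)=-z(N_F(u)\setminus\{v\})$; and for every $x\in V(F')$, since the only vertex of $V(F)\setminus V(F')$ possibly adjacent to $x$ is $u$ and $z(u)=0$, the equation $z(N_F(x))=0$ is equivalent to $z(N_{F'}(x))=0$. Hence the restriction map $z\mapsto z|_{V(F')}$ is a well-defined linear map $\N(F)\to\N(F')$, and I would check it is both injective (its kernel also forces $z(u)=z(v)=0$) and surjective (each $z'\in\N(F')$ is the restriction of the vector $z$ obtained by extending $z'$ with $z(u)=0$ and $z(v)=-z'(N_F(u)\setminus\{v\})$, which one verifies satisfies all the defining equations of $\N(F)$). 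Therefore $\dim\N(F)=\dim\N(F')$.

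It remains to compare matching numbers. Adding the edge $uv$ to any maximum matching $M'$ of $F'$ produces a matching of $F$, so every maximum matching $M$ of $F$ satisfies $\vert M\vert\ge\vert M'\vert+1$. Conversely, starting from a maximum matching $M$ of $F$: if $uv\notin M$ then $v$ is $M$-unsaturated (its only neighbour being $u$), so $u$ must be $M$-saturated, as otherwise $M\cup\{uv\}$ would be a strictly larger matching; replacing the edge of $M$ incident with $u$ by $uv$ then gives a maximum matching containing $uv$. So we may assume $uv\in M$, whence $M\setminus\{uv\}$ is a matching of $F'$ and $\vert M'\vert\ge\vert M\vert-1$. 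Thus $\vert M'\vert=\vert M\vert-1$, and combining with the inductive hypothesis for $F'$,
\[
\dim\N(F)=\dim\N(F')=\vert V(F')\vert-2\vert M'\vert=(\vert V(F)\vert-2)-2(\vert M\vert-1)=\vert V(F)\vert-2\vert M\vert.
\]

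The crux of the argument is the first reduction: the leaf $v$ pins $z(u)$ to zero, which is exactly what makes every remaining linear constraint of $\N(F)$ collapse to the corresponding one of $\N(F')$; once this is in place, the dimension count and the matching count are handled by the same operation of peeling off the pair $\{u,v\}$, and no use of Theorem~\ref{thm:Berge} is needed beyond the trivial observation that an edge joining two $M$-unsaturated vertices enlarges $M$. The only slightly delicate point is the verification of surjectivity of the restriction map, i.e.\ that the extension described above really does lie in $\N(F)$.
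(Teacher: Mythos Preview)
Your proof is correct. The induction via leaf-stripping is clean: the leaf equation forces $z(u)=0$, which decouples the constraints of $F$ from those of $F'$ exactly as you describe, and the restriction map is indeed an isomorphism $\N(F)\to\N(F')$; the matching count $|M|=|M'|+1$ is likewise handled correctly.

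As for comparison: the paper does not give its own proof of this statement. Theorem~\ref{thm:nullity} is quoted from Sachs~\cite{MR0172271} and used as a black box, so there is no argument in the paper to weigh yours against. Your self-contained inductive proof is a standard and perfectly adequate way to establish the result; in fact it is slightly more elementary than the route one sometimes sees (deducing it from the general identity relating the nullity of a bipartite graph to its matching number via the rank of the biadjacency matrix), since you never need to invoke K\"onig's theorem or any bipartite matching theory beyond the trivial augmentation observation you mention.
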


Coleman and Pothen~\cite{MR857589} proved that a sparsest basis of the null space of any matrix can be built greedily.

\begin{thm}[\cite{MR857589}]\label{thm:greedy} Let $B=\{b_1,\ldots,b_d\}$ a basis of the null space of some matrix $A$. If, for each $i\in\{1,\ldots,d\}$, $b_i$ is sparsest among the vectors in the null space of $A$ that do not belong to the subspace generated by $\{b_1,\ldots,b_{i-1}\}$, then $B$ is a sparsest basis of the null space of $A$.\end{thm}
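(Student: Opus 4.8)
The plan is to prove Theorem~\ref{thm:greedy} by an exchange-and-counting argument of the kind familiar from matroid theory (a Rado--Edmonds style greedy argument, here with $\lVert\cdot\rVert_0$, the number of nonzeros, playing the role of weight). Throughout, for a vector $v$ write $\lVert v\rVert_0$ for its number of nonzeros. First I would record a monotonicity observation about the greedy basis itself: since $\operatorname{span}\{b_1,\ldots,b_{i-1}\}\subseteq\operatorname{span}\{b_1,\ldots,b_i\}$, the vector $b_i$ is sparsest over a set of candidates that contains $b_{i+1}$, and hence $\lVert b_1\rVert_0\le\lVert b_2\rVert_0\le\cdots\le\lVert b_d\rVert_0$. (Note $b_1$ is sparsest among all nonzero vectors of $\N(A)$, since the span of the empty set is $\{0\}$, and a sparsest vector outside a proper subspace always exists because sparsities take only finitely many nonnegative integer values.)

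Next, let $C=\{c_1,\ldots,c_d\}$ be an arbitrary basis of $\N(A)$, and relabel its elements so that $\lVert c_1\rVert_0\le\lVert c_2\rVert_0\le\cdots\le\lVert c_d\rVert_0$. It then suffices to show that $\lVert b_i\rVert_0\le\lVert c_i\rVert_0$ for every $i\in\{1,\ldots,d\}$: summing these $d$ inequalities gives $\sum_{i}\lVert b_i\rVert_0\le\sum_{i}\lVert c_i\rVert_0$, so $B$ has no more nonzeros than $C$, and since $C$ was an arbitrary basis this means $B$ is a sparsest basis of $\N(A)$.

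To prove $\lVert b_i\rVert_0\le\lVert c_i\rVert_0$, I would fix $i$ and consider the $i$ linearly independent vectors $c_1,\ldots,c_i$ together with the subspace $W_i=\operatorname{span}\{b_1,\ldots,b_{i-1}\}$, which has dimension $i-1$. Since $i$ linearly independent vectors cannot all lie in an $(i-1)$-dimensional subspace, there is an index $j\le i$ with $c_j\notin W_i$. By the hypothesis of the theorem, $b_i$ is sparsest among all vectors of $\N(A)$ that do not belong to $W_i$, so $\lVert b_i\rVert_0\le\lVert c_j\rVert_0$; and $\lVert c_j\rVert_0\le\lVert c_i\rVert_0$ because $j\le i$ and $C$ was sorted by sparsity. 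This gives $\lVert b_i\rVert_0\le\lVert c_i\rVert_0$, as desired.

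I do not expect a genuine obstacle here; the only point needing care is making the two orderings interact correctly — the comparison basis $C$ must be explicitly sorted by sparsity, while the greedy basis $B$ comes pre-sorted thanks to the monotonicity step — and then applying the pigeonhole fact that $i$ independent vectors do not fit into an $(i-1)$-dimensional space. Everything else is routine bookkeeping, including the remark that a sparsest null vector outside a given proper subspace exists.
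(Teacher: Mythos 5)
The paper does not actually prove this theorem; it cites it from Coleman and Pothen~\cite{MR857589} and uses it as a black box, so there is no internal proof to compare against. Your argument is correct and self-contained, and it is the standard greedy-exchange argument one would expect. The core step is sound: after sorting the competitor basis $C=\{c_1,\ldots,c_d\}$ by number of nonzeros, the dimension count shows that the $i$ linearly independent vectors $c_1,\ldots,c_i$ cannot all lie in the $(i-1)$-dimensional subspace spanned by $\{b_1,\ldots,b_{i-1}\}$, so some $c_j$ with $j\le i$ lies outside it; the greedy choice of $b_i$ then gives that $b_i$ has at most as many nonzeros as $c_j$, hence at most as many as $c_i$, and summing over $i$ shows $B$ is at least as sparse as $C$. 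One minor remark: the monotonicity of the sparsities of $b_1,\ldots,b_d$ established in your opening paragraph is never actually used in the main chain of inequalities---the coordinate-wise bounds sum regardless of how $B$ is ordered---so despite the closing comment that $B$ ``comes pre-sorted,'' that step can be dropped without loss.
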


A \emph{rooted tree} (sometimes also an \emph{in-tree}~\cite{MR1367739}) is a tree $T$ with each edge oriented as \emph{leaving} one of its endpoints and \emph{entering} the other one, in such a way that: for each vertex $v$ there is precisely one edge entering $v$, except precisely for one vertex $r$, called the \emph{root}, having no edge entering it. If so, the rooted tree is also called a \emph{tree rooted at $r$}. If an edge of a rooted tree leaves $u$ and enters $v$, then $v$ is a \emph{child} of $u$ and $u$ is the \emph{parent} of $v$. Each vertex of a rooted tree having no edge leaving it (or, equivalently, having no children) is called a \emph{leaf}.

\section{Finding a $\{-1,0,1\}$-null basis}\label{sec:-1,0,1}

The time bound results in this section are stated assuming that a forest $F$ is given together with some maximum matching $M$. Nevertheless, if a forest $F$ on $n$ vertices is given without a corresponding maximum matching, then a maximum matching of $F$ can be found in $O(n)$ time~\cite{MR0400779}, which keeps the total running time bounds in the results below still valid.

The theorem below gives a procedure for obtaining a $\{-1,0,1\}$-null basis of any forest given along with some maximum matching. The output basis is strongly dependent on the choice of the maximum matching. Our main result will follow by applying the result below to carefully chosen forest and maximum matching.

\begin{thm}\label{thm:B} Let $F$ be a forest and $M$ be a maximum matching of $F$. If $U$ is the set of $M$-unsaturated vertices of $F$ and, for each $u\in U$:
\begin{enumerate}[(i)]
 \item $Q^+(u)$ (resp.\ $Q^-(u)$) is the set of vertices reachable from $u$ by $M$-alternating paths in $F$ of length $4k$ (resp.\ $4k+2$) for any nonnegative integers $k$, and
 
 \item $b_u:V(F)\to\{-1,0,1\}$ such that $b_u(x)$ is $1$ if $x\in Q^+(u)$, $-1$ if $x\in Q^-(u)$, and $0$ otherwise,
\end{enumerate}
then $B=\{b_u\colon\,u\in U\}$ is a $\{-1,0,1\}$-null basis of $F$ and, for each $u\in U$, $b_u$ is the only vector in $B$ that is nonzero at $u$. Moreover, given $F$ and $M$, $B$ can be found in $O(N)$ time, where $N$ is the total number of nonzeros of the vectors in $B$ (i.e., $N=\sum_{u\in U}(\vert Q^+(u)\vert+\vert Q^-(u)\vert)$).\end{thm}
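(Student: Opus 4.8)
The plan is to split the argument into three parts: (a) each $b_u$ lies in $\N(F)$ and the sets $Q^+(u), Q^-(u)$ are well-defined; (b) the family $B=\{b_u : u\in U\}$ is linearly independent and spans $\N(F)$; (c) the running time claim.

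For part (a), I would first check that $Q^+(u)$ and $Q^-(u)$ are disjoint. Since $F$ is a forest, between $u$ and any vertex $x$ there is at most one path, so the parity condition (length $\equiv 0$ or $2\pmod 4$) assigns $x$ to at most one of the two sets; vertices not joined to $u$ by an $M$-alternating path of length $\equiv 0\pmod 2$ land in neither. Here it is convenient to observe that since $u$ is $M$-unsaturated, any $M$-alternating path starting at $u$ must start with a non-matching edge, hence has the form $x_0=u, x_1, x_2, \ldots$ where the edges $x_0x_1, x_2x_3, \ldots$ are non-matching and $x_1x_2, x_3x_4,\ldots$ are matching; in particular such a path has even length exactly when it ends at an $M$-unsaturated vertex or at the $M$-saturated endpoint of a matching edge whose partner is the previous vertex — I will make this precise. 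To verify $b_u \in \N(F)$, fix $x\in V(F)$ and show $b_u(N_F(x))=0$. The key case analysis: if $x$ is $M$-saturated, say $xy\in M$, then I claim among the neighbors of $x$ exactly the appropriate ones are hit. If $x$ itself is reachable from $u$ by an $M$-alternating path of even length, that path enters $x$ via a non-matching edge from some neighbor $w$, then the path $u,\ldots,w,x,y$ extends by the matching edge $xy$; one checks $y$ lies in the opposite set ($Q^-$ if $x\in Q^+$, etc.), and every other neighbor $z\ne y$ of $x$ either is not reachable by an even $M$-alternating path from $u$ (so $b_u(z)=0$) or, using acyclicity, forces a contradiction. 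Thus $b_u(N_F(x)) = b_u(w) + b_u(y) = 1 + (-1) = 0$ in the representative case, and $=0$ trivially when no neighbor of $x$ is in $Q^+(u)\cup Q^-(u)$. (If $x$ is $M$-unsaturated the sum over its neighbors is again $0$ by a similar parity count.) I would also note $b_u(u)=1$ since $u\in Q^+(u)$ via the trivial path of length $0$, and $b_u(u') = 0$ for $u'\in U\setminus\{u\}$, because an $M$-alternating path between two distinct $M$-unsaturated vertices is $M$-augmenting and by Theorem~\ref{thm:Berge} cannot exist since $M$ is maximum; this gives the ``$b_u$ is the only vector in $B$ nonzero at $u$'' assertion and immediately yields linear independence of $B$.

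For the spanning claim, by Theorem~\ref{thm:nullity} we have $\dim\N(F) = |V(F)| - 2|M| = |U|$, and since $B$ has $|U|$ linearly independent members it is automatically a basis; no separate spanning argument is needed. Alternatively one can argue directly that every $z\in\N(F)$ is determined by its values on $U$ and equals $\sum_{u\in U} z(u)\, b_u$, but invoking the dimension count is cleaner.

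For the time bound, the construction is: root each tree of $F$ arbitrarily, and for each $u\in U$ perform a restricted traversal from $u$ that only follows edges consistent with building an $M$-alternating path (alternating non-matching/matching), recording the sign ($+1$ or $-1$) according to the length modulo $4$, and stopping along a branch as soon as the alternation breaks. Because $F$ is a forest, the set of vertices visited from $u$ by such alternating walks forms a subtree, and each visited vertex with nonzero sign is a distinct entry counted in $N$; the traversal touches $O(1)$ extra vertices per branch-stop, and — crucially — using the forest structure one shows the ``wasted'' vertices can be charged to genuine nonzeros so the total work is $O(N)$. The main obstacle I anticipate is exactly this accounting: proving that the exploration started at $u$ does not spend more than $O(|Q^+(u)| + |Q^-(u)|)$ time, i.e.\ that we never traverse long stretches of $F$ only to discover they contribute no nonzero entries. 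I would handle this by arguing that whenever the alternation condition fails at an edge, it fails because we reach a matched vertex whose partner lies back toward $u$, and this can be detected in $O(1)$ time using a precomputed array of matching partners, so each explored edge either leads to a new nonzero entry or is one of at most $\deg$-many dead ends adjacent to a nonzero entry — and the sum of these over the subtree is proportional to the number of nonzeros. Initialization (rooting, computing the partner array, listing $U$) is $O(|V(F)|)$, but since $\N(F)\ne\{0\}$ forces $|V(F)| = O(N)$ in the nontrivial case, and the trivial case $N=0$ (where $\N(F)=\{0\}$, $U=\emptyset$) is handled separately, the overall bound $O(N)$ stands.
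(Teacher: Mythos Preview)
Your overall architecture matches the paper (show each $b_u\in\N(F)$, show $b_u$ is the unique basis vector nonzero at $u$ so independence follows, invoke Theorem~\ref{thm:nullity} for the dimension), but part (a) as sketched has a slip that derails the case analysis. From an $M$-unsaturated $u$, an $M$-alternating path of \emph{even} length ends in a matching edge, not a non-matching one; so your sentence ``if $x$ itself is reachable from $u$ by an $M$-alternating path of even length, that path enters $x$ via a non-matching edge from some neighbor $w$'' has the parity reversed. More importantly, the object to analyze is not whether $x$ is at even alternating distance but whether some \emph{neighbor} $v$ of $x$ is (that is what makes $b_u(N_F(x))$ potentially nonzero). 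The paper organizes the argument this way: assume $x$ has a $\pm1$-labeled neighbor $v$, observe that then $x$ must be $M$-saturated (otherwise the path to $v$ extended by $vx$ is $M$-augmenting, contradicting Theorem~\ref{thm:Berge}), identify the unique second $\pm1$-labeled neighbor $y$ of $x$ (the matching partner of $x$ if $xv\notin M$, or the predecessor of $x$ on the path if $xv\in M$), check $v$ and $y$ carry opposite signs, and use acyclicity to rule out any third $\pm1$-labeled neighbor. Your parenthetical ``if $x$ is $M$-unsaturated the sum over its neighbors is again $0$ by a similar parity count'' is also off: in that case $x$ simply has no $\pm1$-labeled neighbors at all, again by the augmenting-path argument.

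For part (c) your direct alternating traversal can be made to work, but two points. First, the paper takes a different and cleaner route: it builds an auxiliary digraph $D$ on $V(F)$ with an arc $x\to y$ whenever some $v$ has $xv\in E(F)\setminus M$ and $vy\in M$; then $Q^+(u)\cup Q^-(u)$ is exactly the set of vertices reachable from $u$ in $D$, and since $D$ is acyclic a BFS from $u$ visits each reachable vertex once, giving $O(|Q^+(u)|+|Q^-(u)|)$ per $u$ with no dead-end accounting needed. Second, your closing claim that ``$\N(F)\ne\{0\}$ forces $|V(F)| = O(N)$'' is false (take a single isolated vertex together with a long path: $N=1$ but $|V(F)|$ is arbitrary), so it cannot be used to absorb the $O(|V(F)|)$ initialization cost. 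The paper's proof separates the $O(n)$ cost of building $D$ from the $O(N)$ cost of the searches, and you should do likewise rather than assert $n=O(N)$.
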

\begin{proof} Let $u$ be an arbitrary vertex of $U$. Notice that $u\in Q^+(u)$ because $u$ is reachable from itself by a zero-length path which is trivially $M$-alternating. Thus, $b_u$ is nonzero at $u$. We will show that $b_u\in\N(F)$; i.e., $b_u(N_F(x))=0$ for each $x\in V(F)$. Let $x\in V(F)$. We label each vertex $x$ of $F$ with $b_u(x)$. We say a vertex is $0$-, $1$-, or $(-1)$-\emph{labeled} if it is labeled with $0$, $1$, or $-1$, respectively. We say a vertex is \emph{$\pm 1$-labeled} if it is labeled with either $-1$ or $1$. If $x$ has only $0$-labeled neighbors, then $b_u(N_F(x))=0$ holds trivially. Thus, we assume, without loss of generality that $x$ has some $\pm 1$-labeled neighbor $v$ and let $P$ be the unique path from $u$ to $v$ in $F$. By construction, $P$ is an $M$-alternating path of even length. Necessarily, $x$ is $M$-saturated, for otherwise the path $P$ followed by the edge $vx$ would be an $M$-augmenting path of $F$, contradicting the maximality of $M$ in $F$ (by virtue of Theorem~\ref{thm:Berge}). If $xv\notin M$, let $y$ be the vertex matched with $x$ in $M$; otherwise, $x$ is the vertex immediately preceding $v$ in $P$ and let $y$ be the vertex immediately preceding $x$ in $P$. By construction, one of $v$ and $y$ is $1$-labeled and the other is $(-1)$-labeled. If $x$ had a $\pm 1$-labeled neighbor $t$ different from both $v$ and $y$, then there would be two different paths from $u$ to $t$ in $F$ (the $M$-alternating path from $u$ to $t$ and the path formed by the path from $u$ to $x$ in $F$ followed by the edge $xt$, which is not $M$-alternating), contradicting that $F$ is a forest. Hence, the only $\pm 1$-labeled neighbors of $x$ are $v$ and $y$ and, consequently, $b_u(N_F(x))=1+(-1)=0$. This completes the proof of $b_u\in\N(F)$ for each $u\in U$.

If $u$ and $u'$ are different vertices in $U$, then $b_u(u')=0$, since otherwise $u'$ would be reachable from $u$ by an $M$-alternating path of even length, contradicting the fact that $u$ and $u'$ are $M$-unsaturated. $B$ is a set of linearly independent vectors of $\N(F)$ because for each $u\in U$, $b_u$ is the only vector of $B$ that is nonzero at $u$. Since, in addition, $\vert B\vert=\vert U\vert=\vert V(F)\vert-2\vert M\vert$ coincides with the dimension of $\N(F)$ (by Lemma~\ref{thm:nullity}), we conclude that $B$ is a null basis of $F$. Moreover, by construction, $B$ is a $\{-1,0,1\}$-null basis of $F$.

It only remains to show that all the nonzeros of the vectors in $B$ can be determined in $O(N)$ time. For that purpose, let $D$ be the digraph having the same vertices as $F$ and with an arc from $x$ to $y$ if and only if there is some $v\in V(F)$ such that $xv\in E(F)-M$ and $vy\in M$. Notice that it is possible to enumerate the arcs $(x,y)$ of $D$ by considering, for each $M$-saturated vertex $v$, the vertex $y$ such that $vy\in M$ and each $x\in N_F(v)-\{y\}$. Thus, $\vert E(D)\vert\leq\sum_{v\in V(F)}\vert N_F(v)\vert=2\vert E(F)\vert$. Hence, given $F$ and $M$, $D$ can be built in $O(n)$ time. Let $u\in U$. Clearly, $Q^+(u)$ (resp.\ $Q^-(u)$) is the set of vertices that can be reached from $u$ by directed paths in $D$ of even (resp.\ odd) length. Thus, for each $u\in U$, we may determine $Q^+(u)$ and $Q^-(u)$ by performing a breadth-first search from the vertex $u$ in $D$, which takes time proportional to the number of vertices reachable from $u$; i.e., $O(\vert Q^+(u)\vert+\vert Q^-(u)\vert)$ time. In fact, as $F$ is acylic, $D$ has no directed cycles and, consequently, for each $u\in U$, a breadth-first search from $u$ in $D$ reaches no vertex more than once. Hence, we can determine all the nonzeros of the vectors in $B$ in $O\left(\sum_{u\in U}(\vert Q^+(u)\vert+\vert Q^-(u)\vert)\right)=O(N)$ time, where $N$ is the total number of such nonzeros.\end{proof}

Reasoning in a similar way, we show that, given a forest $F$ on $n$ vertices, we can find in $O(n)$ time the set $S$ of vertices $x$ of $F$ for which there is a vector in the null space of $F$ that is nonzero at $x$.

\begin{cor}\label{cor:S} Let $F$ be a forest and $M$ be a maximum matching of $F$. Let $U$ be the set of $M$-unsaturated vertices of $U$, and, for each $u\in U$, $Q(u)$ be the set of vertices reachable from $u$ by $M$-alternating paths in $F$ of even length. If $S=\bigcup_{u\in U}Q(u)$, then $S$ is the set of vertices $x$ of $F$ such that $z(x)\neq 0$ for some $z\in\N(F)$. Moreover, given $F$ and $M$, $S$ can be found in $O(n)$ time, where $n=\vert V(F)\vert$.\end{cor}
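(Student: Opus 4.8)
The plan is to establish the two assertions separately: first that $S = \bigcup_{u\in U} Q(u)$ is exactly the support set $\{x : z(x)\neq 0 \text{ for some } z\in\N(F)\}$, and then that $S$ can be computed in $O(n)$ time. For the set equality, I would lean heavily on Theorem~\ref{thm:B}. Writing $b_u$ as in that theorem, note that $Q(u) = Q^+(u)\cup Q^-(u)$ is precisely the support of $b_u$ (since a vertex reachable from $u$ by an $M$-alternating path of even length lies in exactly one of $Q^+(u)$, $Q^-(u)$, these being the lengths $\equiv 0$ and $\equiv 2 \pmod 4$, respectively, and these are disjoint because $F$ is a forest so the $M$-alternating path is unique). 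Hence $S = \bigcup_{u\in U} \operatorname{supp}(b_u)$. The inclusion $S \subseteq \{x : z(x)\neq 0 \text{ for some } z\in\N(F)\}$ is then immediate: each $b_u\in\N(F)$. For the reverse inclusion, take any $z\in\N(F)$ with $z(x)\neq 0$; since $B=\{b_u : u\in U\}$ is a basis of $\N(F)$, we can write $z = \sum_{u\in U}\lambda_u b_u$, so $z(x)\neq 0$ forces $b_u(x)\neq 0$ for some $u$, i.e.\ $x\in\operatorname{supp}(b_u)\subseteq S$.

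For the running-time claim, I would reuse the digraph $D$ constructed in the proof of Theorem~\ref{thm:B}: $D$ has vertex set $V(F)$ and an arc from $x$ to $y$ whenever $xv\in E(F)-M$ and $vy\in M$ for some $v$, and $D$ can be built in $O(n)$ time with $|E(D)|\le 2|E(F)| = O(n)$ since $F$ is a forest. As observed there, $Q(u)$ is exactly the set of vertices reachable from $u$ by a directed path in $D$ (of any length). So the key new point is that instead of running a separate search from each $u\in U$ — which would cost $O(N)$ rather than $O(n)$ — I want to compute the union $\bigcup_{u\in U} Q(u)$ with a single traversal. The idea is to add a new source vertex $s$ to $D$ with an arc $s\to u$ for every $u\in U$ (using $u\in Q(u)$, realized by the zero-length path), and then perform one breadth-first (or depth-first) search from $s$ in the resulting digraph, marking every vertex it reaches. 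A vertex $x\neq s$ is reached from $s$ if and only if $x$ is reachable in $D$ from some $u\in U$, i.e.\ iff $x\in S$. This single search visits each vertex and arc at most once, so it runs in $O(|V(D)| + |E(D)| + |U|) = O(n)$ time.

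The main thing to be careful about is making sure the single-search argument genuinely gives $O(n)$ and not $O(N)$. In Theorem~\ref{thm:B} the $O(N)$ bound is unavoidable because the *values* $b_u(x)$ (the $\pm 1$ labels) must be recorded per pair $(u,x)$, and a single vertex $x$ can lie in many different $Q(u)$'s. Here, by contrast, we only need membership in the *union* $S$, so each vertex is marked once and never revisited; acyclicity of $D$ (inherited from $F$ being a forest) guarantees the search terminates and touches each vertex once. I would also remark, as the paper already notes, that if $M$ is not supplied it can be computed in $O(n)$ time, so the $O(n)$ bound holds given only $F$. The rest is routine bookkeeping.
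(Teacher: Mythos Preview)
Your proposal is correct and follows essentially the same approach as the paper: both parts rely on Theorem~\ref{thm:B} for the characterization of $S$ as $\bigcup_{u\in U}\operatorname{supp}(b_u)$, and both compute $S$ via a single breadth-first search in the auxiliary digraph $D$ (the paper phrases this as a BFS with initial vertex set $U$, which is equivalent to your super-source construction). One minor remark: acyclicity of $D$ is not needed for the single multi-source BFS to run in $O(n)$ time, since the standard marking in BFS already ensures each vertex and arc is processed at most once.
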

\begin{proof} Let $Q^+(u)$ and $Q^-(u)$ for each $u\in U$ and $B=\{b_u\colon\,u\in U\}$ as in Theorem~\ref{thm:B}. Clearly, for each $x\in V(F)$, there is some $z\in\N(F)$ such that $z(x)\neq 0$ if and only if $b_u(x)\neq 0$ for some $u\in U$, which, by construction, is equivalent to $x\in\bigcup_{u\in U}(Q^+(u)\cup Q^-(u))=\bigcup_{u\in U}Q(u)=S$. It only remains to show that $S$ can be found in $O(n)$ time. For that purpose let us consider again the digraph $D$ having the same vertices as $F$ and with an arc from $x$ to $y$ if and only if there is some $v\in V(F)$ such that $xv\in E(F)-M$ and $vy\in M$. As discussed in the preceding proof, $O(\vert E(D)\vert)=O(n)$ and $D$ can be built from $F$ and $M$ in $O(n)$ time. Since, for each $u\in U$, $Q(u)$ is the set of vertices reachable from $u$ by directed paths in $D$, $S$ is the set of vertices reachable from $U$ by directed paths in $D$. Hence, $S$ can be found in $O(\vert V(D)\vert+\vert E(D)\vert)=O(n)$ time by performing a breadth-first search in $D$ with initial set of vertices $U$.\end{proof}

\section{Finding a $\{-1,0,1\}$- and sparsest null basis}\label{sec:sparsest}

In this section, we give our optimal-time algorithm for finding a $\{-1,0,1\}$- and sparsest null basis of any given forest. Throughout this section, we adopt the following notation:
\begin{itemize}\renewcommand\labelitemi{\normalfont\bfseries \textendash}
 \item $F$ is a forest and $n$ denotes the number of vertices of $F$.
 
 \item $S$ is the set of vertices $x$ of $F$ for which there some vector $z\in\N(F)$ that is nonzero at $x$.
 
 \item $G$ is the forest having no isolated vertices and whose edges are the edges of $F$ having at least one endpoint in $S$.
 
 \item $R$ is the set $V(G)-S$. 

 \item For any graph $H$, let $\NS(H)$ be the set of vectors in $\N(H)$ that are zero at every vertex of $H$ not in $S$.
 
 \item We regard each component of $G$ as rooted at some of its vertices in $S$. If $x$ is a vertex of $G$, we denote by $\kappa(x)$ the set of children of $x$ in $G$, by $\pi(x)$ the parent (if any) of $x$ in $G$, and by $\pi^2(x)$ the parent (if any) of $\pi(x)$ in $G$.

 \item Let $\beta:V(G)\to\mathbb N$ defined by:
 \[ \beta(x)=\begin{cases}
           \min\{\betadown(x),\beta(\pi(x))-\betadown(x)\}&\text{if }x\in R,\\
           \betadown(x)+\beta(\pi(x))&\text{if $x\in S$},
          \end{cases} \]
  where $\beta(\pi(x))$ is meant to be $0$ if $x$ has no parent and where $\betadown:V(G)\to\mathbb N$ is defined by:
  \[ \betadown(x)=\begin{cases}
               \min_{c\in\kappa(x)}\betadown(c)&\text{if }x\in R,\\
               1+\sum_{c\in\kappa(x)}\betadown(c)&\text{if }x\in S.
             \end{cases} \]
\end{itemize}

A few more definitions are given at the beginning of Subsections~\ref{ssec:w} and~\ref{ssec:beta-matchings}.

This section is organized as follows. In Subsection~\ref{ssec:SR}, we discuss some facts about the sets $S$ and $R$. In Subsection~\ref{ssec:w}, we show that $\beta$ serves as a lower bound on the number of nonzeros of some vectors that are nonzero at certain vertices. In Subsection~\ref{ssec:beta-matchings}, we introduce $\beta$-matchings and show how to compute, given $F$, a $\beta$-matching in $O(n)$ time. In subsection~\ref{ssec:main}, we show that certain values of $\beta$ count precisely the number of nonzeros in our proposed sparsest basis and we state and prove our main result.

\subsection{The sets $S$ and $R$}\label{ssec:SR}

The result below was first proved in~\cite{MR664692} (where vertices of $S$ were called $0$-essential; see the Example after Corollary~3.5 therein). A proof is given for completeness.

\begin{thm}[\cite{MR664692}]\label{lem:stable} $S$ is a stable set of $F$.\end{thm}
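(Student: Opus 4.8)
The plan is to show that $S$ is stable by exploiting the $\{-1,0,1\}$-null basis $B=\{b_u : u\in U\}$ and the structure described in Theorem~\ref{thm:B}. Recall from Corollary~\ref{cor:S} that $S=\bigcup_{u\in U}(Q^+(u)\cup Q^-(u))$, where $Q^+(u)$ and $Q^-(u)$ are the vertices reachable from $u$ by $M$-alternating paths of length $\equiv 0$ and $\equiv 2\pmod 4$, respectively. In particular, every vertex of $S$ is reachable from some $M$-unsaturated vertex by an $M$-alternating path of \emph{even} length. Suppose for contradiction that $xy\in E(F)$ with both $x,y\in S$. The key first step is to understand, for a fixed root $u\in U$, the parities of the alternating paths: if $v$ is $\pm 1$-labeled by $b_u$ (equivalently $v\in Q^+(u)\cup Q^-(u)$), then the unique $u$--$v$ path in $F$ is $M$-alternating of even length and, as in the proof of Theorem~\ref{thm:B}, every $M$-saturated vertex off this path that is adjacent to a $\pm 1$-labeled vertex has exactly two $\pm 1$-labeled neighbours, of opposite sign.

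First I would handle the case where $x$ and $y$ lie in the same component of $F$, since distinct components pose no problem. Let $u\in U$ be such that $x\in Q^+(u)\cup Q^-(u)$, and let $P$ be the unique $M$-alternating $u$--$x$ path; $P$ has even length. Now consider the edge $xy$. I claim $y$ lies on $P$: otherwise, $P$ followed by $xy$ is a path from $u$ to $y$ in $F$, and since $F$ is a forest this is \emph{the} unique $u$--$y$ path; but we also know (because $y\in S$) that there is an $M$-alternating even-length path from some $u'\in U$ to $y$. If $u'=u$, then the unique $u$--$y$ path is simultaneously $M$-alternating and equal to $P+xy$, which is not $M$-alternating at the last edge unless $P$ ends with a nonmatching edge at $x$ — but then $xy\in M$ would make $x$ an endpoint of a matching edge, and one checks the parity forces a contradiction (the $u$--$y$ alternating path has odd length, not even). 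If $u'\neq u$, then $u'$ appears on the $u$--$y$ path $P+xy$, hence $u'\in V(P)$, so $u'$ is reachable from $u$ by the sub-path of $P$, which is $M$-alternating of even length — contradicting that $u$ and $u'$ are both $M$-unsaturated (an even-length $M$-alternating path between two unsaturated vertices is an $M$-augmenting path, impossible by Theorem~\ref{thm:Berge}). So $y\in V(P)$.

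So both $x$ and $y$ are on the $M$-alternating path $P$ from $u$, and since $xy\in E(F)$ and $P$ is a path in the forest $F$, $xy$ must be an edge \emph{of} $P$ and $x,y$ are consecutive on it. Now comes the parity punchline: along an $M$-alternating path starting at the $M$-unsaturated vertex $u$, the first edge is not in $M$ (as $u$ is unsaturated), and edges alternate out-of-$M$, in-$M$, out-of-$M$, \ldots. Thus the vertices of $P$ at even distance from $u$ are exactly the ones reachable by even-length alternating prefixes, and these are precisely the vertices that could be $\pm1$-labeled; the vertices at odd distance are $M$-saturated via the matching edge lying further along $P$. Since $x\in S$, $x$ is at even distance from $u$ on $P$, hence $y$ — consecutive to $x$ — is at odd distance, so $y$ is reached from $u$ by an $M$-alternating path of \emph{odd} length. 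I would then argue, exactly as above, that this forces $y\notin S$: any $M$-alternating even-length $u'$--$y$ path would, by uniqueness of paths in $F$, coincide with this odd-length path, a contradiction; or $u'$ would lie on $P$ at even distance and again produce an $M$-augmenting $u$--$u'$ path. This contradicts $y\in S$, completing the proof.

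The main obstacle I anticipate is the careful bookkeeping of \emph{which} endpoint of the matching edge containing an odd-distance vertex of $P$ plays which role, and making the ``$y$ is at odd distance hence not in $S$'' step airtight: one must rule out that a \emph{different} unsaturated vertex $u'$ reaches $y$ by an even alternating path without that path overlapping $P$ in a bad way. The resolution is that $F$ being a forest makes all relevant paths unique, so any two alternating paths to a common vertex share a common structure, and any even alternating path between two $M$-unsaturated vertices is augmenting, which Theorem~\ref{thm:Berge} forbids. Everything else is routine parity tracking along a single alternating path.
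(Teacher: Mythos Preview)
Your overall strategy---pick $u\in U$ with $x\in Q(u)$, use the even $M$-alternating $u$--$x$ path $P$, and then analyse where $y$ sits---is the same circle of ideas as the paper's, but the execution has a genuine gap in the case $u'\neq u$.

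You assert (twice) that $u'$ must lie on the $u$--$y$ path (either on $P+xy$ in Step~3, or on $P$ in Step~5). This is neither justified nor true. Every vertex of $P$ other than $u$ is $M$-saturated by an edge of $P$ (since $P$ is $M$-alternating of even length starting at an unsaturated vertex), so an $M$-unsaturated $u'\neq u$ cannot lie on $P$; the only way $u'\in V(P+xy)$ is $u'=y$, and then your parity claim about the subpath is wrong anyway (the $u$--$u'$ path is $P+xy$, of \emph{odd} length). In general the even $M$-alternating $u'$--$y$ path $P'$ meets $P+xy$ only in some common suffix ending at $y$, and $u'$ sits off to the side. Your case analysis simply does not cover this.

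What actually closes the $u'\neq u$ case---and what the paper does in one line---is to look at the unique $u$--$u'$ path in $F$ itself and argue it is $M$-augmenting. Concretely, $P+xy$ is the unique $u$--$y$ path and $P'$ is the unique $u'$--$y$ path; they share a suffix from some median vertex $m$ to $y$. The $u$--$u'$ path is then the $u$--$m$ prefix of $P+xy$ followed by the $m$--$u'$ prefix of $P'$, and a short parity check (the indices of $m$ on the two paths differ by an odd number) shows this concatenation is $M$-alternating, hence $M$-augmenting, contradicting Theorem~\ref{thm:Berge}. Your ``resolution'' paragraph gestures at this but never carries it out; the sentence ``any even alternating path between two $M$-unsaturated vertices is augmenting'' is also confused, since such a path is necessarily of odd length.
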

\begin{proof} Let $M$, $U$, and $Q(u)$ as in Corollary~\ref{cor:S}. Any edge with both endpoints in $S$ would have one endpoint in $Q(u)$ and another in $Q(u')$ for some $u,u'\in U$, implying either a cycle in $F$ (if $u=u'$) or that the path from $u$ to $u'$ in $F$ is $M$-augmenting (if $u\neq u'$), contradicting that $F$ is a forest or that $M$ is maximum.\end{proof}

Hence, $S$ is also a stable set of $G$ and, since each edge of $G$ has at least one endpoint in $S$, each edge of $G$ has one endpoint in $S$ and another in $R$. Notice that no vertex of $R$ is a leaf of $G$. In fact, if some $r\in R$ had only one neighbor $x$ in $G$ then, by construction, $N_F(r)\cap S=\{x\}$ and thus, for any $z\in\N(F)$ that is nonzero at $x$, $z(N_F(r))=z(N_F(r)\cap S)=z(x)\neq 0$, a contradiction.

The sets $S$ and $R$ were studied in connection with maximum matchings and maximum stable sets in~\cite{1708.00934} (where the vertices in $S$ were called supported vertices and the vertices in $R$ were called core vertices). The theorem below follows by combining Corollary~5.15 of~\cite{1708.00934} with Theorem~\ref{thm:nullity}. We include a derivation from Corollary~\ref{cor:S} for completeness.

\begin{thm}[\cite{1708.00934,MR0172271}]\label{thm:dT=S-R} $\dim\N(F)=\vert S\vert-\vert R\vert$.\end{thm}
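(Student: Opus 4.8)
The plan is to derive $\dim\N(F)=|S|-|R|$ from Corollary~\ref{cor:S} and Theorem~\ref{thm:nullity} by producing a maximum matching $M$ of $F$ whose unsaturated set $U$ satisfies a clean relation with $S$ and $R$, together with a count of the edges of $G$. Fix a maximum matching $M$ of $F$ and let $U$ be its set of unsaturated vertices, so that Theorem~\ref{thm:nullity} gives $\dim\N(F)=n-2|M|=|U|+(n-|U|-2|M|)$; since $n-|U|=2|M|$ (every non-$U$ vertex lies in exactly one edge of $M$), this is just $\dim\N(F)=|U|$. So it suffices to prove $|U|=|S|-|R|$.

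First I would identify $U\subseteq S$: each $u\in U$ is reachable from itself by the trivial (even-length, $M$-alternating) path, so $u\in Q(u)\subseteq S$ by Corollary~\ref{cor:S}. Next I would count edges of $G$ in two ways. By the paragraph following Theorem~\ref{lem:stable}, every edge of $G$ has exactly one endpoint in $S$ and one in $R$, so $|E(G)|=\sum_{r\in R}\deg_G(r)$; and since no vertex of $R$ is a leaf of $G$, each such $r$ has $\deg_G(r)\ge 2$, giving $|E(G)|\ge 2|R|$. Simultaneously $|E(G)|=\sum_{s\in S}\deg_G(s)$. The heart of the argument is to relate these degree sums to $|S|$, $|R|$, and $|U|$ by exploiting that $G$ is a forest: if $G$ has $c$ components, then $|E(G)|=|V(G)|-c=|S|+|R|-c$. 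Combining $|S|+|R|-c=|E(G)|\ge 2|R|$ yields $|S|-|R|\ge c$.

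The main obstacle — and the step I expect to require the most care — is showing the reverse inequality, i.e. that in fact $|E(G)|=2|R|$ exactly (equivalently, every $r\in R$ has degree exactly $2$ in $G$ and each component of $G$ is a tree rooted at an $S$-vertex so that $|S|-|R|=c=|U|$ will follow), or alternatively proving directly that each component of $G$ contains exactly one vertex of $U$. For the latter: within a component of $G$, I would argue via Corollary~\ref{cor:S} that the vertices of that component all lie in $\bigcup_{u}Q(u)$ for the $u\in U$ it contains, and that two distinct $u,u'\in U$ cannot lie in the same component of $G$ — if they did, the $M$-alternating paths realizing $S$-membership would force either a cycle or an $M$-augmenting path in $F$, exactly as in the proof of Theorem~\ref{lem:stable}. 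Conversely each component of $G$, being built from edges incident to $S$, must contain some vertex witnessed by an $M$-alternating even path from some $u\in U$, hence contains at least one element of $U$. Thus $c=|U|$, and $r$-degree counting pins down $|E(G)|=2|R|$, whence $|S|-|R|=|S|+|R|-|E(G)|=|S|+|R|-2|R|=|S|-|R|$... so more precisely $c=|S|+|R|-|E(G)|$ and $|E(G)|=2|R|$ give $c=|S|-|R|$, and combined with $c=|U|=\dim\N(F)$ we obtain $\dim\N(F)=|S|-|R|$, as desired.

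An alternative, arguably cleaner route I would keep in reserve: invoke Corollary~5.15 of~\cite{1708.00934} directly (as the paper suggests) to get the relation between $|S|$, $|R|$ and the matching structure, and then simply substitute into Theorem~\ref{thm:nullity}; this avoids re-deriving the component count but relies on an external result rather than being self-contained. Given the paper's stated preference for a proof "from Corollary~\ref{cor:S} for completeness," I would present the component-counting argument above, making sure the two claims "each component of $G$ meets $U$ in exactly one vertex" and "each $r\in R$ has $\deg_G(r)=2$" are both established, since together they immediately yield $|U|=c=|S|-|R|$.
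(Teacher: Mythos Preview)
Your two central claims are both false, so the proposed route cannot succeed. Take $F=K_{1,3}$ with center $c$ and leaves $a,b,d$. A null vector $z$ must have $z(c)=0$ and $z(a)+z(b)+z(d)=0$, so $S=\{a,b,d\}$, $R=\{c\}$, $G=F$, and $\dim\N(F)=2=|S|-|R|$ as it should. But $\deg_G(c)=3\neq 2$, so ``each $r\in R$ has $\deg_G(r)=2$'' fails; and for the maximum matching $M=\{ca\}$ we have $U=\{b,d\}$, two unsaturated vertices in the single component of $G$, so ``each component of $G$ meets $U$ in exactly one vertex'' also fails. Your edge count gives only the inequality $|S|-|R|\ge c$, and in this example $2>1$ strictly. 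The argument you borrow from the proof of Theorem~\ref{lem:stable} concerns an \emph{edge} between two $S$-vertices, not mere membership in the same component, so it does not yield uniqueness of $u\in U$ per component.

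The paper's proof is much more direct and avoids components entirely: it shows that $M$ restricts to a perfect matching between $R$ and the $M$-saturated vertices of $S$. Since $U\subseteq S$, every $r\in R$ is saturated; its $M$-partner $t$ lies in $S$ because $r$ has some other neighbor $s\in S$, $s\in Q(u)$ for some $u\in U$, and extending the alternating path by $sr,rt$ shows $t\in Q(u)\subseteq S$. Conversely, any $M$-saturated $s\in S$ is matched along an edge of $F$ incident to $s$, hence an edge of $G$, whose other endpoint must lie in $R$ since $S$ is stable. Thus $|S\setminus U|=|R|$ and $\dim\N(F)=|U|=|S|-|R|$. This is the argument you should aim for; the component-counting detour is not merely longer but rests on claims that are simply not true.
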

\begin{proof} Let $M$, $U$, and $Q(u)$ as in Corollary~\ref{cor:S}. Each vertex of $r\in R$ is matched in $M$ (because $U\subseteq S$) to some vertex in $S$ (for if $rt\in M$ and $s\in(N_G(r)\cap S)-\{t\}$, then $s\in Q(u)$ for some $u\in U$ and, by definition, $t\in Q(u)\subseteq S$). Hence, as each $M$-saturated vertex in $S$ has a neighbor in $R$, $\dim\N(F)=\vert U\vert=\vert S\vert-\vert R\vert$.\end{proof}

The forest $G$ was first studied in connection with the structure of the matchings of a tree in \cite{1709.03865} (where the components of $G$ were called S-atoms).

\subsection{Bounding the number of nonzeros from below}\label{ssec:w}

We adopt the following definitions:
\begin{itemize}\renewcommand\labelitemi{\normalfont\bfseries \textendash}
  \item For each $x\in S$, we define the \emph{weight} $w(x)$ of $x$ as the minimum number of nonzeros among the vectors in $\NS(G)$ that are nonzero at $x$.

  \item For each $x\in V(G)$, let $\Gdown(x)$ be the subgraph of $G$ induced by $x$ and its descendants in $G$.
  
  \item For each $x\in V(G)$, we define the \emph{downwards weight} $w(x)$ of $x$ as follows:
  \begin{enumerate}[(i)]
    \item if $x\in R$, then $\wdown(x)=\min_{c\in\kappa(x)}\wdown(c)$.
 
    \item if $x\in S$, then $\wdown(x)$ is the minimum number of nonzeros among the vectors in $\NS(\Gdown(x))$ that are nonzero at $x$.
  \end{enumerate}
\end{itemize}

The main result of this subsection is Lemma~\ref{lem:w>=beta} showing that $\beta$ is a lower bound on $w$ for each $x\in S$.

\begin{rmk}\label{rmk:z} If $z$ is a vector in $\NS(G)$ that is nonzero at some $x\in S$, it is straightforward to verify that the restriction of $z$ to $V(\Gdown(x))$ belongs to $\NS(\Gdown(x))$. Hence, $z$ has at least $\wdown(x)$ nonzeros on $V(\Gdown(x))$ for every $x\in S$ such that $z(x)\neq 0$.\end{rmk}

In order to prove the main result of this subsection, we need the two preliminary below.

\begin{lem}\label{lem:wdown} For each $x\in V(G)$, $\wdown(x)=\betadown(x)$.\end{lem}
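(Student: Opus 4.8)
The plan is to prove $\wdown(x) = \betadown(x)$ for every $x \in V(G)$ by induction on the height of $x$ in its component of $G$ (equivalently, by processing vertices of $G$ from the leaves upward), distinguishing the two cases $x \in R$ and $x \in S$ in the inductive step. For the case $x \in R$, recall that $x$ has no children that are not in $S$ (since $S$ is stable in $G$ and every edge of $G$ has exactly one endpoint in each of $S$ and $R$), so $\kappa(x) \subseteq S$; moreover $x$ is not a leaf of $G$, so $\kappa(x) \neq \emptyset$. By definition $\betadown(x) = \min_{c \in \kappa(x)} \betadown(c)$ and $\wdown(x) = \min_{c \in \kappa(x)} \wdown(c)$, so the claim for $x$ follows immediately from the inductive hypothesis applied to each $c \in \kappa(x)$; there is essentially nothing to prove here beyond unwinding the two definitions.

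The substantive case is $x \in S$, where we must show that the minimum number of nonzeros among vectors $z \in \NS(\Gdown(x))$ with $z(x) \neq 0$ equals $1 + \sum_{c \in \kappa(x)} \betadown(c)$. For the upper bound $\wdown(x) \le 1 + \sum_{c \in \kappa(x)} \betadown(c)$, I would construct an explicit vector $z$: set $z(x) = 1$, and for each child $c \in \kappa(x) \subseteq R$, pick (using the inductive hypothesis, which identifies $\betadown(c)$ with $\wdown(c)$, and the $R$-case recursion) a child $c'$ of $c$ realizing $\wdown(c) = \wdown(c') = \betadown(c)$ together with an optimal vector $z_{c'} \in \NS(\Gdown(c'))$ nonzero at $c'$ with exactly $\betadown(c')$ nonzeros; then rescale $z_{c'}$ so that its value at $c'$ makes the constraint $z(N_G(c)) = 0$ hold at $c$ — this constraint reads $z(x) + z(c') = 0$ once we arrange that $c'$ is the only nonzero neighbor of $c$ — and take $z$ to be the sum of $z(x)$ at $x$ plus these rescaled $z_{c'}$'s, extended by zero elsewhere. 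One checks that the null-space constraint $z(N_G(y)) = 0$ is satisfied at every vertex $y$ of $\Gdown(x)$: at $x$ and at $c'$-type vertices it holds by the rescaling, at the chosen children $c$ it holds because only $x$ and $c'$ are nonzero neighbors, at unchosen siblings of $c'$ and their descendants $z$ is identically zero, and at vertices of $R$ on the path the stability of $S$ ensures the only nonzero neighbors are the two consecutive $S$-vertices whose values cancel. This $z$ has $1 + \sum_{c \in \kappa(x)} \betadown(c')= 1 + \sum_{c \in \kappa(x)} \betadown(c)$ nonzeros, all within $V(\Gdown(x))$, giving the upper bound.

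For the lower bound $\wdown(x) \ge 1 + \sum_{c \in \kappa(x)} \betadown(c)$, let $z \in \NS(\Gdown(x))$ with $z(x) \neq 0$. Clearly $x$ itself contributes one nonzero. For each child $c \in \kappa(x)$, the null-space constraint at $c$ is $z(N_G(c)) = 0$; since $x \in N_G(c)$ and $z(x) \neq 0$, there must be some other neighbor $c'$ of $c$ with $z(c') \neq 0$, and every such $c'$ is a child of $c$ (as $x$ is the parent of $c$), hence lies in $S$. By Remark~\ref{rmk:z}, the restriction of $z$ to $V(\Gdown(c'))$ lies in $\NS(\Gdown(c'))$ and is nonzero at $c'$, so $z$ has at least $\wdown(c') \ge \min_{c'' \in \kappa(c)} \wdown(c'') = \wdown(c)$ nonzeros on $V(\Gdown(c'))$, which by induction equals $\betadown(c)$. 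Since the subtrees $\Gdown(c)$ over distinct children $c \in \kappa(x)$ are pairwise vertex-disjoint and none contains $x$, summing these disjoint contributions together with the nonzero at $x$ yields at least $1 + \sum_{c \in \kappa(x)} \betadown(c) = \betadown(x)$ nonzeros, completing the lower bound and the induction.

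I expect the main obstacle to be the upper-bound direction in the $x \in S$ case: one must verify carefully that the patched-together vector genuinely lies in $\NS(\Gdown(x))$ — in particular that it is zero on all of $R \cap V(\Gdown(x))$ and satisfies the constraint at every vertex, including the chosen $R$-children $c$ where the whole rescaling is engineered, and including the deeper $R$-vertices inside each $\Gdown(c')$ where one relies on the inductive structure of the optimal $z_{c'}$. Care is also needed to confirm that the child $c'$ of $c$ selected to realize $\betadown(c)$ can be taken so that $z_{c'}$ is supported entirely within $\Gdown(c')$ and nonzero at $c'$; this is exactly what the inductive hypothesis $\wdown(c') = \betadown(c')$ together with the definition of $\wdown$ for vertices in $S$ provides.
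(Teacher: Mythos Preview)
Your proposal is correct and follows essentially the same approach as the paper's proof: both argue by induction from the leaves upward, dispose of the $x\in R$ case directly from the parallel recursive definitions, and for $x\in S$ prove the two inequalities by (i) constructing an explicit vector that places $1$ at $x$ and, for each child $c\in\kappa(x)$, splices in an optimal vector supported in $\Gdown(c')$ for a child $c'$ of $c$ realizing $\wdown(c)$, and (ii) using the constraint $z(N_G(c))=0$ together with Remark~\ref{rmk:z} and disjointness of the subtrees $\Gdown(c)$ to force at least $\wdown(c)$ nonzeros per child. Your write-up is more explicit about verifying the null-space constraints for the patched vector, but the argument is the same.
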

\begin{proof} We proceed by induction. Let $x\in V(G)$. If $x$ is a leaf, then $x\in S$ and, by definition, $\wdown(x)=1$ and also $\betadown(x)=1$. Suppose $x$ is not a leaf and, by induction, that the equality $\wdown(c)=\betadown(c)$ holds for each $c\in\kappa(x)$. If $x\in R$, then, by definition and induction hypothesis, $\wdown(x)=\min_{c\in\kappa(x)}\wdown(c)=\min_{c\in\kappa(x)}\betadown(c)=\betadown(x)$. Thus, we assume without loss of generality, that $x\in S$. Let $c_1,\ldots,c_k$ be the children of $x$ in $G$. For each $i\in\{1,\ldots,k\}$, let $d_i\in\kappa(c_i)$ such that $\wdown(d_i)=\wdown(c_i)$. Let $z_i\in\NS(\Gdown(d_i))$ such that $z_i(d_i)=-1$ and having precisely $\wdown(c_i)$ nonzeros. Hence, the function $z:V(G)\to\mathbb R$ such that $z(y)=1$ if $y=x$, $z(y)=z_i(y)$ if $x\in V(\Gdown(d_i))$, and $0$ otherwise, satisfies $z\in\NS(\Gdown(x))$ and $z(x)\neq 0$. This proves that $\wdown(z)\leq 1+\sum_{c\in\kappa(x)}\wdown(c)$. Conversely, let $z\in\NS(\Gdown(x))$ that is nonzero at $x$ and let $i\in\{1,\ldots,k\}$. As $z(x)\neq 0$ and $z(N_G(c_i))=0$, $z$ must be nonzero also at some $e_i\in\kappa(c_i)$. By Remark~\ref{rmk:z}, $z$ has at least $\wdown(e_i)$ zeros on $V(\Gdown(e_i))$. This proves that $\wdown(x)\geq 1+\sum_{i=1}^k\wdown(e_i)\geq 1+\sum_{c\in\kappa(x)}\wdown(c)$. We conclude that $\wdown(x)=1+\sum_{c\in\kappa(x)}\wdown(c)$. Therefore, by induction hypothesis and definition of $\beta$, $\wdown(x)=1+\sum_{c\in\kappa(x)}\betadown(c)=\betadown(x)$. This completes the proof of the lemma.\end{proof}

\begin{lem}\label{lem:w} For each $x\in S$ that has a parent in $G$,
\[ w(x)\geq\wdown(x)+\min\{\wdown(\pi(x)),w(\pi^2(x))-\wdown(\pi(x))\}. \]
\end{lem}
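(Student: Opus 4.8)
The plan is to analyze a vector $z\in\NS(G)$ that is nonzero at $x$ and count its nonzeros by splitting $V(G)$ into three regions: the subtree $\Gdown(x)$ below $x$, the subtree below $\pi(x)$ but not below $x$, and everything at or above $\pi^2(x)$. Since $x\in S$ has a parent, $\pi(x)\in R$ (as $S$ is stable in $G$ and every edge of $G$ joins $S$ and $R$), so $\pi(x)$ is not a leaf; and since $R$ has no leaves in $G$, $\pi(x)$ has some child other than $x$ unless $\pi(x)$ has degree one, but we showed no vertex of $R$ is a leaf, so $\pi(x)$ has a parent $\pi^2(x)\in S$ exactly when $\pi(x)$ is not a root — and here we should note that a vertex of $R$ is never a root of its component (components are rooted at vertices of $S$), so $\pi^2(x)$ always exists whenever $\pi(x)$ does. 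First I would record that, by Remark~\ref{rmk:z} together with Lemma~\ref{lem:wdown}, $z$ contributes at least $\wdown(x)=\betadown(x)$ nonzeros on $V(\Gdown(x))$, giving the first summand.

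Next I would handle the choice inside the minimum by a case split according to whether $z$ is zero or nonzero at $\pi^2(x)$. If $z(\pi^2(x))=0$: applying $z(N_G(\pi(x)))=0$ at the vertex $\pi(x)\in R$, whose neighbors are $\pi^2(x)$, $x$, and the other children of $\pi(x)$, and using $z(\pi^2(x))=0$ and $z(x)\neq 0$, forces $z$ to be nonzero at some child $c\neq x$ of $\pi(x)$; by Remark~\ref{rmk:z} and Lemma~\ref{lem:wdown}, $z$ then has at least $\wdown(c)\geq\wdown(\pi(x))$ further nonzeros, all lying in $V(\Gdown(c))$, hence disjoint from $V(\Gdown(x))$. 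This yields $w(x)\ge\wdown(x)+\wdown(\pi(x))$ in this case. If instead $z(\pi^2(x))\neq 0$: then the restriction of $z$ to $V(\Gdown(x))$ has at least $\wdown(x)$ nonzeros, and separately the restriction of $z$ to $V(G)$ "above" — more precisely, a suitable restriction showing $z$ witnesses $w(\pi^2(x))$ — has at least $w(\pi^2(x))$ nonzeros; the overlap is exactly the nonzeros of $z$ on $V(\Gdown(\pi(x)))\setminus V(\Gdown(x))$, which is at least $\ldots$ — here I need to be careful. The clean way is: $z$ has at least $\wdown(x)$ nonzeros strictly inside $\Gdown(x)$ and at least $w(\pi^2(x))$ nonzeros inside $\Gdown(\pi^2(x))$ if that restriction lies in $\NS(\Gdown(\pi^2(x)))$ and is nonzero at $\pi^2(x)$, but these two regions overlap on $\Gdown(x)$; subtracting, we get $w(x)\ge \wdown(x)+w(\pi^2(x))-(\text{nonzeros of }z\text{ on }V(\Gdown(\pi(x))))$, and the nonzeros on $V(\Gdown(\pi(x)))$ that are not on $V(\Gdown(x))$ number at least... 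I would instead argue directly: the number of nonzeros of $z$ on $V(\Gdown(\pi(x)))$ is at least $\wdown(x)$ plus the nonzeros on $V(\Gdown(\pi(x)))\setminus V(\Gdown(x))$, the latter being at least $0$, so $w(x)=\#\{z\ne 0\}\ge \#\{z\ne 0 \text{ on }\Gdown(x)\} + \#\{z\ne 0\text{ off }V(\Gdown(\pi(x)))\} \ge \wdown(x) + (w(\pi^2(x)) - \#\{z\ne 0\text{ on }V(\Gdown(\pi(x)))\setminus\{$ stuff counted in $w(\pi^2(x))$ outside $\Gdown(x)\}$... This bookkeeping is the delicate part and I address it below.

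The cleanest formulation, which I would adopt: let $z$ be a minimizer for $w(x)$. Consider the vector $z'$ obtained by restricting $z$ to $V(G)\setminus (V(\Gdown(\pi(x)))\setminus V(\Gdown(x)))$ — equivalently, zeroing $z$ on the children of $\pi(x)$ other than $x$ and their descendants. One checks $z'\in\NS(G)$ (the null-space conditions at vertices of that deleted region become trivial, and at $\pi(x)$ the equation still holds because deleting those terms is compensated — actually it need not be, so instead I restrict to $\Gdown(\pi^2(x))$ and argue the restriction of $z$ there is in $\NS(\Gdown(\pi^2(x)))$ and is nonzero at $\pi^2(x)$ when $z(\pi^2(x))\neq 0$). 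Then $z$ restricted to $V(\Gdown(\pi^2(x)))$ has at least $w(\pi^2(x))$ nonzeros, of which at least $\wdown(\pi(x))$ many lie in $V(\Gdown(\pi(x)))\setminus V(\Gdown(x))$ is \emph{false} in general — rather, I use: the nonzeros of $z$ partition as (those in $\Gdown(x)$) $\cup$ (those in $\Gdown(\pi(x))\setminus\Gdown(x)$) $\cup$ (the rest), the first block has $\ge\wdown(x)$ elements, and the union of the first and third blocks supports a vector in $\NS(\Gdown(\pi^2(x)))$ nonzero at $\pi^2(x)$ (obtained by zeroing out the middle block, which is legitimate here precisely because... the equation at $\pi(x)\in R$ reads $z(\pi^2(x))+z(x)+\sum_{\text{other children}}z(c)=0$, and zeroing the middle block breaks this unless $z(\pi^2(x))+z(x)=0$). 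So I must \emph{rescale}: replace the kept part below by $z(x)$ suitably — concretely, the middle block can be replaced by a single nonzero on one child $c'$ of $\pi(x)$ (using $\wdown(\pi(x))\le\wdown(c')$), reconstituting the equation. This shows $w(\pi^2(x))\le (w(x)-\#\text{middle block}) + \wdown(\pi(x))$ when the middle block is nonempty in an essential way, i.e. $w(x)\ge w(x)-\wdown(\pi(x))+ \text{(size of middle block)}$...

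Given the subtlety, the main obstacle is exactly this inclusion–exclusion between $\Gdown(x)$, the sibling-subtrees of $x$, and the part of $G$ above $\pi^2(x)$, together with the rescaling argument needed when $z(\pi^2(x))\ne 0$ to produce a certificate for $w(\pi^2(x))$ whose support avoids the sibling-subtrees. I would organize the final write-up as: (1) fix a minimizer $z$ for $w(x)$ with $\wdown(x)$ of its nonzeros confined to $V(\Gdown(x))$; (2) if $z(\pi^2(x))=0$, derive a nonzero on a sibling-subtree and conclude $w(x)\ge\wdown(x)+\wdown(\pi(x))$; (3) if $z(\pi^2(x))\ne 0$, let $m$ be the number of nonzeros of $z$ on $\bigcup_{c\in\kappa(\pi(x))\setminus\{x\}}V(\Gdown(c))$, modify $z$ on those sibling-subtrees to a minimum-support vector (size $\le \wdown(\pi(x))$, possible since $\wdown(\pi(x))=\min_{c}\wdown(c)$) while preserving membership in $\NS(\Gdown(\pi^2(x)))$ and the value at $\pi^2(x)$, obtaining $w(\pi^2(x))\le w(x)-m+\wdown(\pi(x))$, i.e. $w(x)\ge w(\pi^2(x))-\wdown(\pi(x))+m\ge w(x)$... and combining with $m\ge 0$ plus the $\wdown(x)$ nonzeros in $\Gdown(x)$ being disjoint from the sibling-subtrees and from the above-$\pi^2(x)$ part, conclude $w(x)\ge\wdown(x)+w(\pi^2(x))-\wdown(\pi(x))$; (4) take the minimum of the two cases. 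The only genuinely nontrivial check is that the modification in step (3) indeed lands in $\NS(\Gdown(\pi^2(x)))$, which follows because the only null-space equation involving the sibling-subtrees and something outside $\Gdown(x)$ is the one at $\pi(x)$, and that equation constrains only $z(\pi^2(x))+z(x)+\sum_{c}z(c)$, which we preserve.
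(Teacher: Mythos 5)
Your case split on $z(\pi^2(x))$ matches the paper, and your Case~$z(\pi^2(x))=0$ is correct and essentially identical to the paper's. But your Case~$z(\pi^2(x))\neq 0$ has a genuine gap in the bookkeeping, exactly at the spot you flagged as delicate.

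In step~(3) you construct a new vector $z''$ by keeping $z$ on $V(\Gdown(x))$ and on $V(G)-V(\Gdown(\pi(x)))$, and replacing $z$ on the sibling-subtrees by a suitably scaled minimum-support vector on one child $c'$ of $\pi(x)$. Write $A$ for the number of nonzeros of $z$ on $V(\Gdown(x))$, $m$ for the nonzeros on the sibling-subtrees, and $C$ for the nonzeros elsewhere, so $w(x)=A+m+C$ and $A\geq\wdown(x)$. Your $z''$ has at most $A+\wdown(\pi(x))+C$ nonzeros, which gives $w(\pi^2(x))\leq A+C+\wdown(\pi(x))=w(x)-m+\wdown(\pi(x))$. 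To reach the target inequality $w(x)\geq\wdown(x)+w(\pi^2(x))-\wdown(\pi(x))$ from this, you would need $w(x)=A+m+C\geq\wdown(x)+(A+C)$, i.e.\ $m\geq\wdown(x)$, which is simply not available; $m$ can be $0$. The phrase ``combining with $m\geq 0$ plus the $\wdown(x)$ nonzeros in $\Gdown(x)$ being disjoint \dots'' does not close this: the $\geq\wdown(x)$ nonzeros on $V(\Gdown(x))$ are \emph{still present in $z''$} and hence already counted in the upper bound you proved for $w(\pi^2(x))$; you cannot subtract them a second time.

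The fix, which is what the paper does, is to zero out \emph{all} of $V(\Gdown(\pi(x)))$---including $V(\Gdown(x))$---and then plant a minimum-weight vector $z_y$ (scaled so that $z_y(y)=-z(\pi^2(x))$) on a single child-subtree $\Gdown(y)$ with $y\in\kappa(\pi(x))$ chosen so that $\wdown(y)=\wdown(\pi(x))$. The resulting $z^*\in\NS(G)$ agrees with $z$ outside $\Gdown(\pi(x))$, satisfies the equation at $\pi(x)$ because $z^*(\pi^2(x))+z^*(y)=z(\pi^2(x))-z(\pi^2(x))=0$ and all other children contribute $0$, and has at most $C+\wdown(\pi(x))\leq (w(x)-A)+\wdown(\pi(x))\leq w(x)-\wdown(x)+\wdown(\pi(x))$ nonzeros. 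This yields $w(\pi^2(x))\leq w(x)-\wdown(x)+\wdown(\pi(x))$, equivalently the second branch of the minimum. In short, the surgery must also discard the nonzeros of $z$ below $x$; keeping them is precisely what makes your inequality too weak to combine with the $\wdown(x)$ term.
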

\begin{proof} Let $z\in\NS(G)$ that is nonzero at $x$ and having precisely $w(x)$ nonzeros. As $x\in S$, $\pi(x)\in R$ and thus $\pi(x)$ has some parent $\pi^2(x)$. We consider two cases.

\mbox{\textbf{Case 1:}~\emph{Suppose $z(\pi^2(x))=0$.}} Thus, since $z(x)\neq 0$ and $z(N_G(\pi(x)))=0$, there must be some $y\in\kappa(\pi(x))-\{x\}$ such that $z(y)\neq 0$. By Remark~\ref{rmk:z}, $z$ has at least $\wdown(x)$ (resp.\ $\wdown(y)$) nonzeros on $V(\Gdown(x))$ (resp.\ $V(\Gdown(y))$). Therefore, $w(x)\geq\wdown(x)+\wdown(y)\geq\wdown(x)+\wdown(\pi(x))$.

\mbox{\textbf{Case 2:}~\emph{Suppose $z(\pi^2(x))\neq 0$.}} We assume, without loss of generality, that $z(\pi^2(x))=1$.
By Remark~\ref{rmk:z}, $z$ has at least $\wdown(x)$ nonzeros on $V(\Gdown(x))$. Let $y\in\kappa(\pi(x))$ such that $\wdown(\pi(x))=\wdown(y)$ and let $z_y\in\NS(\Gdown(y))$ that is nonzero at $y$ and having precisely $\wdown(\pi(x))$ nonzeros. Without loss of generality, we assume that $z_y(y)=-1$. Hence, $z^*:V(G)\to\mathbb R$ defined by $z^*(t)=z_y(t)$ if $t\in V(\Gdown(y))$, $z^*(t)=0$ if $t\in V(\Gdown(\pi(x)))-V(\Gdown(y))$, and $z^*(t)=z(t)$ otherwise, is a vector in $\NS(G)$ being nonzero at $\pi^2(x)$ and having at most $w(x)-\wdown(x)+\wdown(\pi(x))$ nonzeros. Thus, by definition of $w$, $w(\pi^2(x))\leq w(x)-\wdown(x)+\wdown(\pi(x))$.

In both cases, $w(x)\geq\wdown(x)+\min\{\wdown(\pi(x)),w(\pi^2(x))-\wdown(\pi(x))\}$.\qed
\end{proof}

We now prove the main result of this subsection.

\begin{lem}\label{lem:w>=beta} For each $x\in S$, $w(x)\geq\beta(x)$.\end{lem}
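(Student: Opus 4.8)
The plan is to prove the inequality $w(x)\geq\beta(x)$ by induction on the depth of $x$ in its component of $G$ (i.e., on the length of the path from $x$ to the root of its component). First I would recall that, by definition of $\beta$ on vertices of $S$, we have $\beta(x)=\betadown(x)+\beta(\pi(x))$, where $\beta(\pi(x))$ is taken to be $0$ when $x$ has no parent; and, since $\pi(x)\in R$, the definition of $\beta$ on vertices of $R$ gives $\beta(\pi(x))=\min\{\betadown(\pi(x)),\beta(\pi^2(x))-\betadown(\pi(x))\}$ (again with $\beta(\pi^2(x))$ meant to be $0$ if $\pi(x)$ has no parent, i.e.\ if $x$ is a grandchild-free root situation). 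By Lemma~\ref{lem:wdown}, $\wdown=\betadown$ everywhere, so the right-hand side of Lemma~\ref{lem:w} reads exactly $\wdown(x)+\min\{\wdown(\pi(x)),w(\pi^2(x))-\wdown(\pi(x))\}=\betadown(x)+\min\{\betadown(\pi(x)),w(\pi^2(x))-\betadown(\pi(x))\}$.

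Next I would handle the base case: if $x$ has no parent in $G$ (so $x$ is the root of its component, which lies in $S$), then $\beta(x)=\betadown(x)=\wdown(x)$ by Lemma~\ref{lem:wdown}, and trivially any $z\in\NS(G)$ nonzero at $x$ restricts to a vector of $\NS(\Gdown(x))$ nonzero at $x$ (Remark~\ref{rmk:z}), so $w(x)\geq\wdown(x)=\beta(x)$. For the inductive step, suppose $x\in S$ has a parent; then $\pi(x)\in R$ has a parent $\pi^2(x)\in S$, and $\pi^2(x)$ is strictly closer to the root than $x$, so the induction hypothesis gives $w(\pi^2(x))\geq\beta(\pi^2(x))$. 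Now I would combine: by Lemma~\ref{lem:w} and Lemma~\ref{lem:wdown},
\[ w(x)\geq\betadown(x)+\min\{\betadown(\pi(x)),\,w(\pi^2(x))-\betadown(\pi(x))\}. \]
Since $w(\pi^2(x))\geq\beta(\pi^2(x))$, we get $w(\pi^2(x))-\betadown(\pi(x))\geq\beta(\pi^2(x))-\betadown(\pi(x))$, hence the minimum on the right is at least $\min\{\betadown(\pi(x)),\beta(\pi^2(x))-\betadown(\pi(x))\}=\beta(\pi(x))$ by the defining recursion of $\beta$ on $R$. Therefore $w(x)\geq\betadown(x)+\beta(\pi(x))=\beta(x)$, completing the induction.

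The one subtlety I would need to be careful about is the behavior of $\min$ under the substitution: replacing the second argument $w(\pi^2(x))-\betadown(\pi(x))$ by the smaller-or-equal quantity $\beta(\pi^2(x))-\betadown(\pi(x))$ can only decrease (or keep equal) the value of the $\min$, which is exactly the direction we want for a lower bound on $w(x)$ — so monotonicity of $\min$ in each argument is what makes the step go through. I also want to make sure the edge conventions match: when $\pi(x)$ has no parent (so $\beta(\pi^2(x))$ is declared to be $0$), Lemma~\ref{lem:w} does not literally apply since it assumes $x$ has a parent but says nothing forcing $\pi(x)$ to have one; however, as noted in Subsection~\ref{ssec:SR}, no vertex of $R$ is a leaf of $G$, and more to the point a vertex of $R$ that were the root would have to be in $S$, a contradiction — so in fact every $\pi(x)\in R$ genuinely has a parent $\pi^2(x)\in S$, and the induction is well-founded with no degenerate sub-case to check. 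This reconciliation of the boundary conventions with the structural facts about $G$ is the only place where more than a one-line argument is needed; the rest is a direct assembly of Lemmas~\ref{lem:wdown} and~\ref{lem:w} with the recursive definition of $\beta$.
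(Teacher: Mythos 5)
Your proof is correct and follows essentially the same route as the paper's: base case for parentless $x\in S$, then an inductive step that combines Lemma~\ref{lem:w} with Lemma~\ref{lem:wdown}, uses the monotonicity of $\min$ together with the induction hypothesis $w(\pi^2(x))\geq\beta(\pi^2(x))$, and closes with the recursive definition of $\beta$. Your extra remark that $\pi(x)\in R$ is never a root (since roots are chosen in $S$), so $\pi^2(x)$ always exists, is a correct reading of the same fact the paper invokes implicitly in the proof of Lemma~\ref{lem:w}.
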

\begin{proof} 
We proceed by induction. If $x$ has no parent in $G$, then $w(x)=\wdown(x)=\betadown(x)=\beta(x)$ by definition of $w$ and $\beta$ and Lemma~\ref{lem:wdown}. Thus, we assume, without loss of generality, that $x$ has some parent $\pi(x)$ and, by induction, that $w(\pi^2(x))\geq\beta(\pi^2(x))$. Hence, by Lemmas~\ref{lem:wdown} and~\ref{lem:w}, and because $\pi(x)\in R$,
\begin{align*}
 w(x)&\geq\wdown(x)+\min\{\wdown(\pi(x)),w(\pi^2(x))-\wdown(\pi(x))\}\\
     &\geq\betadown(x)+\min\{\betadown(\pi(x)),\beta(\pi^2(x))-\betadown(\pi(x))\}
      =\betadown(x)+\beta(\pi(x))=\beta(x).
\end{align*}
This completes the proof of the lemma.\end{proof}

\subsection{$\beta$-matchings}\label{ssec:beta-matchings}

In this subsection, we define $\beta$-matchings and show that a $\beta$-matching can be found in $O(n)$ time.

We adopt the following definitions:
\begin{itemize}\renewcommand\labelitemi{\normalfont\bfseries \textendash}
  \item If $x\in R$, a \emph{$\beta$-minimizer for $x$} is any vertex $y\in N_G(x)$ such that one of the following assertions holds:
    \begin{enumerate}[(i)]
      \item $y\in\kappa(x)$ and $\beta(x)=\betadown(y)$, or
      
      \item $y=\pi(x)$ and $\beta(x)=\beta(\pi(x))-\betadown(x)$.
  \end{enumerate}
  It follows by definition that each $x\in R$ has at least one $\beta$-minimizer.
  
  \item A \emph{$\beta$-matching} is any set $M=\{r\phi(r)\colon\,r\in R\}$ such that, for each $r\in R$, $\phi(r)$ is a $\beta$-minimizer for $r$.      
\end{itemize}

\begin{lem}\label{lem:beta-matching} Each $\beta$-matching is a maximum matching of $G$.\end{lem}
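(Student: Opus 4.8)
The plan is to show that a $\beta$-matching $M=\{r\phi(r)\colon r\in R\}$ is a matching of $G$, and then that it is maximum by invoking Theorems~\ref{thm:dT=S-R} and~\ref{thm:nullity}. First I would verify that $M$ is indeed a matching, i.e., that the edges $r\phi(r)$ are pairwise vertex-disjoint. Since each edge of $G$ has exactly one endpoint in $R$ and one in $S$, two edges $r\phi(r)$ and $r'\phi(r')$ with $r\neq r'$ can only collide by sharing a vertex of $S$, say $\phi(r)=\phi(r')=s$. I need to rule this out using the definition of $\beta$-minimizer. The key combinatorial observation is that a vertex $s\in S$ can be the $\beta$-minimizer of at most one of its neighbors in $R$: think of the component of $G$ rooted at a vertex of $S$, and note that $s$ has at most one neighbor in $R$ ``above'' it (its parent $\pi(s)$, if any) and possibly several ``below'' it (its children $\kappa(s)$). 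If $s=\phi(r)=\pi(r)$ for some child-direction choice, that forces $r\in\kappa(s)$ with $\beta(r)=\beta(s)-\betadown(r)$; if instead $s=\phi(r)=\pi(s)$'s role, then $r=\pi(s)$ and $\beta(s)=\betadown(s)+\beta(\pi(s))$ with the minimum in $\beta(s)$ achieved... — the cleaner route is to argue directly that the two cases (i) and (ii) in the definition of $\beta$-minimizer are mutually exclusive in a way that pins down a unique incident edge per shared $S$-vertex. Concretely: the minimum defining $\beta(s)$ for $s\in S$ plays no role here since $s\in S$ uses the additive formula; the delicate point is the minimum in $\beta(r)=\min\{\betadown(r),\beta(\pi(r))-\betadown(r)\}$ for $r\in R$, and I would show that if $s$ serves as $\beta$-minimizer for two distinct $r,r'\in R$ then one of them, say $r$, has $s=\pi(r)$ (so $\beta(r)=\beta(s)-\betadown(r)$) and the structure of rooted trees forces $r=\pi(s)$, but then $s$'s other neighbor in $R$ that picks $s$ must lie in $\kappa(s)$, and a short counting/parity argument with the $\betadown$ recursions derives a contradiction. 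The main obstacle is getting this disjointness argument exactly right, since it is the only place real work happens.

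Once $M$ is known to be a matching, maximality is immediate by a dimension count: by Theorem~\ref{thm:nullity} applied to $G$ (which is a forest), a maximum matching $M^*$ of $G$ satisfies $|V(G)|-2|M^*|=\dim\N(G)$. On the other hand, $G$ is obtained from $F$ by deleting isolated vertices, which does not change the null space dimension, so $\dim\N(G)=\dim\N(F)=|S|-|R|$ by Theorem~\ref{thm:dT=S-R}; and $|V(G)|=|S|+|R|$ since $V(G)=S\cup R$ disjointly. Combining, $|M^*|=\tfrac12(|V(G)|-\dim\N(G))=\tfrac12((|S|+|R|)-(|S|-|R|))=|R|$. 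But $|M|=|R|$ by construction (one edge $r\phi(r)$ per $r\in R$, and these are distinct edges since $M$ is a matching), so $|M|=|R|=|M^*|$ and hence $M$ is maximum. I would present these two halves — ``$M$ is a matching'' then ``$|M|=|R|$ equals the maximum matching size'' — in that order, flagging the disjointness step as the crux and keeping the dimension count as a brief closing paragraph.

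\begin{proof} Let $M=\{r\phi(r)\colon\,r\in R\}$ be a $\beta$-matching. We first show that $M$ is a matching of $G$. Suppose, for contradiction, that two distinct edges $r\phi(r)$ and $r'\phi(r')$ of $M$, with $r,r'\in R$ and $r\neq r'$, share a vertex. Since each edge of $G$ has exactly one endpoint in $R$ (namely $r$, resp.\ $r'$) and one in $S$ (namely $\phi(r)$, resp.\ $\phi(r')$), the shared vertex must be the common $S$-endpoint; i.e., $\phi(r)=\phi(r')=:s\in S$. So $s$ is a $\beta$-minimizer for both $r$ and $r'$. As the component of $G$ containing $s$ is a rooted tree and $s\in N_G(r)\cap N_G(r')$, at least one of $r,r'$ is a child of $s$; without loss of generality, $r\in\kappa(s)$ and $\beta(s)=\betadown(r)$ holds by virtue of case (i) in the definition of $\beta$-minimizer for the vertex $s$ — wait, that is backwards: $s\in S$ and $r\in R$, so it is $s$ that is a $\beta$-minimizer \emph{for} $r\in R$. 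Thus one of the following holds for $r$: either (i) $s\in\kappa(r)$ and $\beta(r)=\betadown(s)$, or (ii) $s=\pi(r)$ and $\beta(r)=\beta(\pi(r))-\betadown(r)=\beta(s)-\betadown(r)$. Since $s$ is adjacent to both $r$ and $r'$ in the tree, at most one of $r,r'$ equals $\pi(s)$ and at most one of them lies in $\kappa(s)$, so at most one of them satisfies (i) with respect to $s$ — that is, has $s$ as a child — and, likewise, at most one has $s$ as its parent. But in a rooted tree each of $r,r'$ is either the parent of $s$ or a child of $s$; since they are distinct, one of them, say $r$, is the parent of $s$ (so $r=\pi(s)$) and the other, $r'$, is a child of $s$ (so $r'\in\kappa(s)$). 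For $r=\pi(s)$ to have $s$ as a $\beta$-minimizer via~(ii) we need $s=\pi(r)$, contradicting $r=\pi(s)$ in a rooted tree; hence $r$ must use~(i), so $s\in\kappa(r)$, again contradicting $r=\pi(s)$. This contradiction shows $M$ is a matching of $G$, and moreover $|M|=|R|$ since the $|R|$ edges $r\phi(r)$ are pairwise distinct.

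It remains to show $M$ is maximum. Let $M^\ast$ be a maximum matching of $G$. Since $G$ is a forest, Theorem~\ref{thm:nullity} gives $\dim\N(G)=|V(G)|-2|M^\ast|$. Now $G$ arises from $F$ by deleting isolated vertices, which does not affect the null space, so $\dim\N(G)=\dim\N(F)$; by Theorem~\ref{thm:dT=S-R}, this equals $|S|-|R|$. Also $V(G)=S\cup R$ with $S\cap R=\emptyset$ (by definition $R=V(G)-S$), so $|V(G)|=|S|+|R|$. Therefore
\[ |M^\ast|=\frac{|V(G)|-\dim\N(G)}{2}=\frac{(|S|+|R|)-(|S|-|R|)}{2}=|R|=|M|. \]
Hence $M$ is a matching of $G$ of maximum size, as desired.\end{proof}
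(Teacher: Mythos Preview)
Your argument that $M$ is a matching is broken in two places. First, the claim ``at most one of $r,r'$ lies in $\kappa(s)$'' is simply false: a vertex $s\in S$ can have many children in $R$, so both $r$ and $r'$ could be children of $s$, each selecting $s=\pi(r)=\pi(r')$ as its $\beta$-minimizer via case~(ii). Your case split ``one is the parent of $s$, the other a child of $s$'' is therefore not exhaustive. Second, even in the case you do treat, your ``contradiction'' is not one: if $r=\pi(s)$, then $s\in\kappa(r)$ by definition, so case~(i) is available for $r$ and says exactly $s\in\kappa(r)$ --- this is consistent with $r=\pi(s)$, not in conflict with it. You never actually rule anything out. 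The real obstruction is numerical, not structural: the paper uses the recursions for $\beta$ and $\betadown$ to show that if $s=\pi(r_2)$ is chosen as minimizer for $r_2$, then $\beta(s)\le 2\betadown(r_2)$, and then derives a contradiction with $\betadown(s)\ge 1+\betadown(r_1)+\betadown(r_2)$ (when both $r_i\in\kappa(s)$) or with $\betadown(s)>\betadown(r_2)$ (when $r_1=\pi(s)$). You cannot avoid engaging with these inequalities.

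Your maximality argument also rests on a false premise: $G$ does \emph{not} arise from $F$ by deleting isolated vertices. By definition, $G$ drops every edge of $F$ with both endpoints outside $S$, together with any vertices left isolated; entire nontrivial components of $F$ can disappear (e.g., adjoin a disjoint $K_2$ to any forest with nonempty $S$). So ``$\dim\N(G)=\dim\N(F)$'' is not justified, and your dimension count collapses. The paper's route here is a one-liner you already have the ingredients for: every edge of $G$ has exactly one endpoint in $R$, so no matching of $G$ can exceed $|R|$ edges; since $M$ (once known to be a matching) has $|R|$ edges, it is maximum.
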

\begin{proof} Suppose, for a contradiction, that some $\beta$-matching is not a matching of $G$. Thus, some $s\in S$ is a $\beta$-minimizer for two different vertices $r_1$ and $r_2$ of $R$. As $s$ cannot have both $r_1$ and $r_2$ as parents, we assume, without loss of generality, that $s=\pi(r_2)$. As $s$ is a $\beta$-minimizer for $r_2$, $\beta(s)-\betadown(r_2)=\beta(r_2)\leq\betadown(r_2)$ and, in particular, $\beta(s)\leq 2\betadown(r_2)$.

If $s=\pi(r_1)$, then, by symmetry, $\beta(s)\leq 2\betadown(r_1)$ and, as a consequence, $1+\betadown(r_1)+\betadown(r_2)\leq\betadown(s)\leq\beta(s)\leq 2\min\{\betadown(r_1),\betadown(r_2)\}$, a contradiction. Hence, we assume, without loss of generality that $s\in\kappa(r_1)$. As $s$ is a $\beta$-minimizer for $r_1$, $\betadown(s)=\beta(r_1)=\betadown(r_1)$, which implies $\beta(s)=\betadown(s)+\beta(r_1)=2\betadown(s)$. Since, as shown in the preceding paragraph, $\beta(s)\leq 2\betadown(r_2)$, we conclude that $\betadown(r_2)\geq \betadown(s)$, which contradicts the definition of $\betadown(s)$. This contradiction proves that each $\beta$-matching is a matching of $G$.

As each edge of $G$ has one endpoint in $R$ and $M$ saturates all of $R$, $M$ is a maximum matching of $G$.\end{proof}

\begin{lem}\label{lem:beta-matching-O(n)} If $F$ is given, then the values of $\beta(x)$ for each $x\in V(G)$ and a $\beta$-matching can be found in $O(n)$ time.\end{lem}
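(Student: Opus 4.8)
The plan is to establish Lemma~\ref{lem:beta-matching-O(n)} in three stages: first build $G$ (together with a rooting of its components at vertices of $S$), then compute $\betadown$ and $\beta$ by tree traversals, and finally extract a $\beta$-matching by reading off a $\beta$-minimizer for each $r\in R$. For the first stage, I would invoke Corollary~\ref{cor:S} to compute $S$ in $O(n)$ time; then $G$ is obtained by discarding every edge of $F$ with no endpoint in $S$ and every resulting isolated vertex, which is clearly $O(n)$. Since $S$ is a stable set of $F$ (Theorem~\ref{lem:stable}) and every edge of $G$ meets $S$, every component of $G$ contains a vertex of $S$; picking one such vertex per component as its root and orienting edges toward it takes $O(n)$ via a depth-first search, and during the same traversal we record $\kappa(x)$, $\pi(x)$, and whether $x\in S$ or $x\in R$ for every vertex.

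For the second stage, I would compute $\betadown$ by a single post-order (bottom-up) pass over each rooted component: a leaf lies in $S$ and gets $\betadown=1$; an internal vertex in $R$ gets $\min_{c\in\kappa(x)}\betadown(c)$ and an internal vertex in $S$ gets $1+\sum_{c\in\kappa(x)}\betadown(c)$; since each vertex is processed once and each $\betadown(c)$ is consulted once by its parent, this is $O(n)$ total. Then I would compute $\beta$ by a single pre-order (top-down) pass: the root of each component has $\beta=\betadown$ (its $\beta(\pi(x))$ term is $0$ by convention); for a non-root $x\in R$ we set $\beta(x)=\min\{\betadown(x),\beta(\pi(x))-\betadown(x)\}$ and for a non-root $x\in S$ we set $\beta(x)=\betadown(x)+\beta(\pi(x))$, each requiring only the already-computed $\beta(\pi(x))$ and $\betadown(x)$, so again $O(n)$. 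One should note that a top-down order is well defined precisely because $\beta(x)$ depends only on $x$ and its parent, so processing vertices in non-decreasing distance from the root (as produced by a BFS from the root) suffices.

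For the third stage, for each $r\in R$ I would select $\phi(r)$ as follows: examine $\beta(r)$; if $\beta(r)=\beta(\pi(r))-\betadown(r)$, take $\phi(r)=\pi(r)$; otherwise $\beta(r)=\betadown(r)$, in which case, by definition of $\betadown$ on $R$, some child $c\in\kappa(r)$ has $\betadown(c)=\betadown(r)=\beta(r)$, and I take $\phi(r)$ to be such a child, found by scanning $\kappa(r)$. This scan costs $O(|\kappa(r)|)$, so over all $r\in R$ the total is $O\!\left(\sum_{r\in R}|\kappa(r)|\right)=O(n)$. The resulting $M=\{r\phi(r)\colon r\in R\}$ is a $\beta$-matching by construction, and it is a (maximum) matching by Lemma~\ref{lem:beta-matching}, though validity is not needed for the time bound. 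Summing the three stages gives the claimed $O(n)$ bound.

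I do not anticipate a genuine obstacle here; the one point that merits care is making the recursive definitions of $\betadown$ and $\beta$ into honest iterative computations with the right evaluation order — post-order for $\betadown$, which depends on children, and top-order (BFS from each root) for $\beta$, which depends on the parent — and verifying that the ``$\beta(\pi(x))=0$ if $x$ has no parent'' convention is handled at the roots. Once the orders are fixed, every quantity is computed by a constant amount of work at each vertex plus work proportional to its degree, which telescopes to $O(n)$ since $G$ is a forest on at most $n$ vertices.
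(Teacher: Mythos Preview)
Your proposal is correct and follows essentially the same approach as the paper: compute $S$ via Corollary~\ref{cor:S}, build and root $G$, compute $\betadown$ by a postorder pass and $\beta$ by a preorder pass, then find a $\beta$-minimizer for each $r\in R$ by scanning its neighborhood. Your write-up is somewhat more explicit about the selection rule for $\phi(r)$ and the evaluation orders, but the argument is the same.
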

\begin{proof} Suppose $F$ is given. By Corollary~\ref{cor:S}, $S$ can be found in $O(n)$ time. Then, also $G$ and $R$ can be easily found in $O(n)$ time. We can root the components of $G$ at some vertex of $S$ each, in $O(n)$ total time (e.g., by depth-first search). As the value of $\betadown$ at every leaf is $1$ and, for every non-leaf $x$, $\betadown(x)$ can be computed in $O(\vert\kappa(x)\vert)$ time from the values of $\betadown$ at its children, the values $\betadown(v)$ for every $v\in V(G)$ can be computed in $O(n)$ time by traversing $G$ in postorder. Moreover, since $\beta(x)=\betadown(x)$ if $x$ has no parent (by definition) and, for every vertex $x$ having a parent, the value $\beta(x)$ can be computed in $O(1)$ time from the values $\betadown(x)$ and $\beta(\pi(x))$, the values $\beta(x)$ for every $x\in V(G)$ can be computed in $O(n)$ time by first computing $\betadown$ in $O(n)$ time and then traversing $G$ in preorder. Therefore, a $\beta$-matching can be found easily in additional $O(n)$ time by finding a $\beta$-minimizer for each $r\in R$ (which can be accomplished by traversing the neighborhood of $r$ once).\end{proof}

\subsection{Main result}\label{ssec:main}

The lemma below shows that certain values of $\beta$ count the number of nonzeros of the vectors in the sparsest null basis we will propose in our main result (Theorem~\ref{thm:main}). Recall from Lemma~\ref{lem:beta-matching} that each $\beta$-matching is a maximum matching of $G$.

\begin{lem}\label{lem:q=beta} Let $M$ be a $\beta$-matching. If $u$ is an $M$-unsaturated vertex of $G$ and $Q(u)$ is the set of vertices reachable from $u$ in $G$ by $M$-alternating paths of even length, then $\vert Q(u)\vert=\beta(u)$. \end{lem}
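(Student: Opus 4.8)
The plan is to compute $|Q(u)|$ by a recursion on $G$ that mirrors the recursive definition of $\beta$. First I record some preliminary facts. By definition, $M=\{r\phi(r):r\in R\}$, and by Lemma~\ref{lem:beta-matching} it is a maximum matching of $G$; in particular $M$ saturates every vertex of $R$, so every $M$-unsaturated vertex of $G$ lies in $S$ and, in particular, $u\in S$. Since $S$ is stable and each edge of $G$ joins $S$ with $R$, every neighbour in $G$ of a vertex of $S$ lies in $R$; and since $M$ is a matching, for each $r\in R$ the only edge of $M$ incident with $r$, and the only edge of $M$ incident with $\phi(r)$, is $r\phi(r)$.

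For each $r\in R$, let $q(r)$ be the number of vertices $v$ of $G$ for which there exists an $M$-alternating path in $G$ from $r$ to $v$ of odd length whose first edge is $r\phi(r)$ (in a forest, such a path is unique when it exists). Such a path is either the single edge $r\phi(r)$, contributing the vertex $\phi(r)$, or it leaves $\phi(r)$ along an edge $\phi(r)y$ with $y\in N_G(\phi(r))\setminus\{r\}$, which is necessarily not in $M$, and thereafter is an $M$-alternating path from $y$ of odd length whose first edge is $y\phi(y)$. As $G$ is a forest, the vertices contributed through distinct choices of $y$ lie in distinct components of $G-\phi(r)$, so these contributions are pairwise disjoint, which yields
\[ q(r)=1+\sum_{y\in N_G(\phi(r))\setminus\{r\}}q(y). \]
This recursion is well founded, since a cyclic chain of dependencies would produce a non-backtracking closed walk of positive length in $G$, which a forest cannot contain. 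The same branch analysis applied at $u$, every edge at $u$ being outside $M$ because $u$ is $M$-unsaturated, gives
\[ |Q(u)|=1+\sum_{c\in N_G(u)}q(c). \]

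The heart of the argument is the claim that $q(r)=\beta(r)$ for every $r\in R$, which I would prove by induction following the acyclic dependency order of the recursion for $q$. Write $s=\phi(r)$. If $s$ is a leaf of $G$, then its only neighbour is $r$, so $r=\pi(s)$, $s\in\kappa(r)$ and $q(r)=1$; and, $s$ being a $\beta$-minimizer of type (i) for $r$, $\beta(r)=\betadown(\phi(r))=\betadown(s)=1=q(r)$, which settles the base case. If $s$ is not a leaf, then, $s$ being a $\beta$-minimizer for $r$, either $s\in\kappa(r)$ with $\beta(r)=\betadown(s)$, or $s=\pi(r)$ with $\beta(r)=\beta(\pi(r))-\betadown(r)$. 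In the first case $N_G(s)\setminus\{r\}=\kappa(s)$, and each $c\in\kappa(s)$ satisfies $\phi(c)\neq s$ (else $s$ would be $M$-matched to both $c$ and $r$), whence $\phi(c)\in\kappa(c)$ and $\beta(c)=\betadown(c)$; so by the induction hypothesis and the definition of $\betadown$ on $S$, $q(r)=1+\sum_{c\in\kappa(s)}\betadown(c)=\betadown(s)=\beta(r)$. In the second case $N_G(s)\setminus\{r\}$ consists of $\kappa(s)\setminus\{r\}$ together with $\pi(s)$ (the latter omitted when $s$ is a root), every $c\in\kappa(s)\setminus\{r\}$ again has $\beta(c)=\betadown(c)$, and the induction hypothesis gives $q(r)=1+\sum_{c\in\kappa(s)\setminus\{r\}}\betadown(c)+\beta(\pi(s))=\betadown(s)-\betadown(r)+\beta(\pi(s))$ (reading $\beta(\pi(s))$ as $0$ when $s$ is a root), which equals $\beta(s)-\betadown(r)=\beta(\pi(r))-\betadown(r)=\beta(r)$.

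Combining these, $|Q(u)|=1+\sum_{c\in N_G(u)}\beta(c)$, where $N_G(u)$ consists of $\kappa(u)$ together with $\pi(u)$ (omitted if $u$ is a root). For each $c\in\kappa(u)$ we have $\pi(c)=u$ and $\phi(c)\neq u$ (since $u$ is $M$-unsaturated), whence $\phi(c)\in\kappa(c)$ and $\beta(c)=\betadown(c)$; hence $\sum_{c\in\kappa(u)}\beta(c)=\betadown(u)-1$ by the definition of $\betadown$ on $S$, and therefore $|Q(u)|=\betadown(u)+\beta(\pi(u))$, which is exactly $\beta(u)$ by the definition of $\beta$ on $S$ (with $\beta(\pi(u))$ read as $0$ when $u$ is a root). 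I expect the main obstacle to be the bookkeeping in the inductive step $q(r)=\beta(r)$ --- keeping the root and non-root sub-cases straight and checking that the ``upward'' propagation in the case $s=\pi(r)$ telescopes correctly against the nested minima concealed in $\beta$ --- together with a careful justification of the two branch decompositions, where the disjointness of the branches is precisely where the forest structure of $G$ enters.
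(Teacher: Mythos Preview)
Your proof is correct and follows essentially the same approach as the paper. The paper defines the same quantity (calling it $|Q(x)|$ for $x\in R$), proves the same identity $|Q(x)|=\beta(x)$ via the identical case split on whether $\phi(x)\in\kappa(x)$ (``cascading'') or $\phi(x)=\pi(x)$, and then assembles $|Q(u)|$ in the same way. The only organizational difference is that the paper separates the two cases into Claim~1 (cascading, by downward induction on descendants) and Claim~2 (general, by a ``fewest ancestors'' argument that invokes Claim~1 for siblings), whereas you handle both cases in a single induction along the acyclic dependency order of your recursion for $q$; the observation that $\phi(c)\in\kappa(c)$ forces $\beta(c)=\betadown(c)$ (which you use for the children $c$ of $s$) is exactly what the paper records at the start of its Claim~1.
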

\begin{proof} Let $M=\{r\phi(x)\colon\,r\in R\}$ be a $\beta$-matching. For each $x\in S$ (resp.\ $x\in R$), we denote by $Q(x)$ the set of vertices reachable from $x$ in $G$ by $M$-alternating paths of even (resp.\ odd) length starting with an edge not in $M$ (resp.\ an edge in $M$).

We say a vertex $x\in R$ is \emph{cascading} if $\phi(x)\in\kappa(x)$.
 
\textbf{Claim 1:} \emph{For each cascading vertex $x\in R$, $\vert Q(x)\vert=\beta(x)=\betadown(x)$.} We prove the claim by induction. Let $x\in R$ that is cascading. If $\phi(x)$ is a leaf, then $\vert Q(x)\vert=\vert\{\phi(x)\}\vert=1$ and also $\beta(x)=\betadown(x)=1$. Suppose that $\phi(x)$ is not a leaf and, by induction, that for all cascading vertices $y\in R$ that are descendants of $x$, $\vert Q(y)\vert=\betadown(y)$ holds. Since $M$ is a matching and $\phi(x)$ is matched with $x$ in $M$, each $c\in\kappa(\phi(x))$ is cascading and, by induction hypothesis, $\vert Q(c)\vert=\betadown(c)$. As $x\in R$ and $\phi(x)\in\kappa(x)$, $\beta(x)\leq\betadown(x)\leq\betadown(\phi(x))$. Moreover, as $\phi(x)$ is a $\beta$-minimizer for $x$ and $\phi(x)\neq\pi(x)$, necessarily $\beta(x)=\betadown(x)=\betadown(\phi(x))$.  Hence, by definition of $Q$ and $\betadown$:
\[ \vert Q(x)\vert=\vert Q(\phi(x))\vert
                      =1+\sum_{c\in\kappa(\phi(x))}\vert Q(c)\vert
                      =1+\sum_{c\in\kappa(\phi(x))}\betadown(c)
                      =\betadown(\phi(x))
                      =\beta(x)
                      =\betadown(x), \]
which completes the proof of Claim~1.

\textbf{Claim 2:} \emph{For each $x\in R$, $\vert Q(x)\vert=\beta(x)$.} Suppose, for a contradiction, that the claim is false and let $x$ be a vertex having the fewest ancestors among those vertices $y\in R$ such that $\vert Q(y)\vert\neq\beta(y)$. If $x$ is cascading, then, by Claim~1, $\vert Q(x)\vert=\beta(x)$. Hence, we assume, without loss of generality, that $x$ is not cascading; i.e., $\phi(x)=\pi(x)$. As $M$ is a matching, each $c\in\kappa(\pi(x))-\{x\}$ is cascading and, by Claim~1, $\vert Q(c)\vert=\betadown(c)$. By the choice of $x$, $\vert Q(\pi^2(x))\vert=\beta(\pi^2(x))$ (where if $\pi(x)$ has no parent, each of $\vert Q(\pi^2(x))\vert$ and $\beta(\pi^2(x))$ denotes $0$). Hence, by the definition of $Q$ and $\beta$ and because $\pi(x)$ is a $\beta$-minimizer for $x$:
\begin{align*}
  \vert Q(x)\vert&=\vert Q(\pi^2(x))\vert+1+\sum_{c\in\kappa(\pi(x))-\{x\}}\vert Q(c)\vert=\beta(\pi^2(x))+1+\sum_{c\in\kappa(\pi(x))-\{x\}}\betadown(c)\\
                 &=\beta(\pi^2(x))+\betadown(\pi(x))-\betadown(x)=\beta(\pi(x))-\betadown(x)=\beta(x),
\end{align*}
which contradicts the choice of $x$. This contradiction proves Claim~2.

Let $u$ be an $M$-unsaturated vertex of $G$. Thus, each $c\in\kappa(u)$ is cascading and, by Claim~1, $\vert Q(c)\vert=\betadown(c)$. Moreover, by Claim~2, $\vert Q(\pi(u))\vert=\beta(\pi(u))$ (where if $u$ has no parent, each of $\vert Q(\pi(u))\vert$ and $\beta(\pi(u))$ denotes $0$). Hence, by definition of $Q$, $\betadown$, and $\beta$,
\[ \vert Q(u)\vert=1+\sum_{c\in\kappa(u)}\vert Q(c)\vert+\vert Q(\pi(u))\vert
                  =1+\sum_{c\in\kappa(u)}\betadown(c)+\beta(\pi(u))=\betadown(u)+\beta(\pi(u))=\beta(u), \]
as desired.\end{proof}

We are now ready to prove our main result.

\begin{thm}\label{thm:main} Let $M$ be a $\beta$-matching. If $U$ is the set of $M$-unsaturated vertices of $G$ and, for each $u\in U$:
\begin{enumerate}[(i)]
 \item $Q^+(u)$ (resp.\ $Q^-(u)$) is the set of vertices reachable from $u$ by $M$-alternating paths in $G$ of length $4k$ (resp.\ $4k+2$) for any nonnegative integers $k$, and
 
 \item $b_u:V(F)\to\{-1,0,1\}$ such that $b_u(x)$ is $1$ if $x\in Q^+(u)$, $-1$ if $x\in Q^-(u)$, and $0$ otherwise,
\end{enumerate}
then $B=\{b_u\colon\,u\in U\}$ is a $\{-1,0,1\}$- and sparsest null basis of $F$ and, for each $u\in U$, $b_u$ is the only vector in $B$ that is nonzero at $u$ and $b_u$ is sparsest among the vertices in $\N(F)$ that are nonzero at $u$. Moreover, given $F$, $B$ can be found in $O(N)$ time, where $N$ is the number of nonzeros in any sparsest null basis of $F$.\end{thm}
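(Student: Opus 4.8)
The plan is to build the basis $B$ from the $\beta$-matching $M$ of $G$ by applying Theorem~\ref{thm:B} to the forest $G$ (plus the isolated vertices of $F$ that lie outside $G$, which contribute nothing to $\N$ since $\N(F)=\N(G)$ up to the obviously-zero coordinates, because every vertex not in $V(G)$ is either isolated in $F$ or all of its neighbors lie outside $S$). So first I would note that by Lemma~\ref{lem:beta-matching}, $M$ is a maximum matching of $G$, and that the $M$-unsaturated vertices $U$ of $G$ all lie in $S$ (since $M$ saturates all of $R$). Theorem~\ref{thm:B} applied to $G$ and $M$ then immediately gives that $B=\{b_u : u\in U\}$ is a $\{-1,0,1\}$-null basis of $G$, hence of $F$, with each $b_u$ the unique member of $B$ nonzero at $u$, and that $B$ can be computed in $O(N')$ time where $N'=\sum_{u\in U}(|Q^+(u)|+|Q^-(u)|)$ is its total number of nonzeros.

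The heart of the argument is showing $B$ is \emph{sparsest}. By Theorem~\ref{thm:greedy}, it suffices to show that each $b_u$ is sparsest among the vectors of $\N(F)$ not in the span of $\{b_{u'} : u'\in U, u' \text{ already listed}\}$; in fact I would prove the stronger statement in the theorem that $b_u$ is sparsest among \emph{all} vectors of $\N(F)$ nonzero at $u$ (this works for the greedy criterion because any vector outside the span of the previously-listed $b_{u'}$ must be nonzero at some not-yet-listed $u$, after subtracting off the appropriate combination of previous basis vectors — here one orders $U$ arbitrarily and uses that $b_{u'}(u'')=0$ for distinct $u',u''\in U$). The key observations are: (a) the number of nonzeros of $b_u$ equals $|Q^+(u)|+|Q^-(u)| = |Q(u)|$, where $Q(u)$ is the set of vertices reachable from $u$ by even-length $M$-alternating paths; (b) by Lemma~\ref{lem:q=beta}, $|Q(u)|=\beta(u)$; and (c) for $u\in U$, $\beta(u)=\betadown(u)$ since an $M$-unsaturated vertex of a component rooted at a vertex of $S$ must itself be that root (because $M$ saturates all of $R$, any non-root $S$-vertex has its parent in $R$ matched either to it or above it — one needs to check an unsaturated $u$ cannot be a proper descendant, which follows from the structure of $\beta$-matchings). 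So $b_u$ has exactly $\betadown(u)=\wdown(u)$ nonzeros by Lemma~\ref{lem:wdown}. On the other hand, any $z\in\N(F)$ nonzero at $u\in S$ restricts to a vector of $\NS(G)$ nonzero at $u$ (using that $S$ is stable, Theorem~\ref{lem:stable}, so the defining equations of $\N(F)$ at vertices of $R$ only involve $S$-coordinates, and $z$ vanishes off $S$ by the defining equations at $R$-vertices — wait, more carefully: $z$ need not vanish off $S$, but its restriction to $V(G)$ followed by Remark~\ref{rmk:z}-type reasoning confines the count), and then has at least $w(u)\geq\beta(u)=\betadown(u)$ nonzeros by definition of $w$ and Lemma~\ref{lem:w>=beta}. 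Hence $b_u$ is sparsest among vectors nonzero at $u$, and greedily $B$ is a sparsest null basis of $F$.

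For the running-time claim, once $B$ is known to be sparsest, $N'$ (its number of nonzeros) equals $N$, the number of nonzeros in \emph{any} sparsest null basis of $F$ (all sparsest bases have the same number of nonzeros, being $\sum_{u\in U}\beta(u)$). By Lemma~\ref{lem:beta-matching-O(n)}, given $F$ we can compute $S$, $G$, $R$, the $\beta$-values and a $\beta$-matching $M$ in $O(n)$ time; since $n=O(N)$ whenever $\N(F)\neq 0$ is not the concern (if $\N(F)=\{0\}$ then $B=\emptyset$ and there is nothing to compute, and if $\N(F)\neq\{0\}$ then... actually one must be careful: $n$ need not be $O(N)$, e.g.\ a forest with huge nullity could have a basis of $\{-1,0,1\}$-vectors each with few nonzeros — but in fact $N\geq |U| = \dim\N(F)$ and $N \geq$ one per nonzero coordinate, and the $O(n)$ preprocessing is dominated only if $n=O(N)$; here the resolution is that we only need the preprocessing on $G$, and $G$ has no isolated vertices so $|V(G)|\leq 2|E(G)|$ and every edge of $G$ is incident to $S$, and one argues $|V(G)| = O(N)$ because each vertex of $G$ is reachable in $D$ from some $u\in U$... this is exactly the acyclicity-of-$D$ argument reused). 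Then $B$ itself is computed in $O(N')=O(N)$ time by Theorem~\ref{thm:B}. The main obstacle I anticipate is the bookkeeping in the preceding parenthesis: cleanly arguing that the $O(n)$ preprocessing does not exceed $O(N)$, i.e.\ that $|V(G)|=O(N)$, which requires observing that every vertex of $G$ lies in some $Q^+(u)\cup Q^-(u)$ — equivalently $V(G)=\bigcup_u Q(u)$, since $S=\bigcup_u Q(u)$ hits every $S$-vertex and every $R$-vertex is the parent or a child of some reached vertex — so $|V(G)|\leq 2N$ after accounting for the at-most-doubling from $R$-vertices adjacent to reached $S$-vertices; combined with $|E(G)|\leq |V(G)|$ (forest) this gives the bound. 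The other routine-but-necessary check is that vertices of $F$ outside $V(G)$ genuinely contribute nothing, so that a basis of $\NS(G)$-type vectors extends to a basis of $\N(F)$ by zero-padding, and that $\dim\N(F)=|U|=|S|-|R|$ matches Theorem~\ref{thm:dT=S-R}.
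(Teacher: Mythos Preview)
Your overall architecture matches the paper's: apply Theorem~\ref{thm:B} to $G$ with the $\beta$-matching $M$, lift to $\N(F)$ via zero-padding using $\dim\N(F)=\vert S\vert-\vert R\vert=\vert U\vert$, show each $b_u$ is sparsest among vectors nonzero at $u$ via $\vert Q(u)\vert=\beta(u)\leq w(u)$, and feed this into Theorem~\ref{thm:greedy}. However, there are two genuine gaps.

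First, your claim (c) that $\beta(u)=\betadown(u)$ for $M$-unsaturated $u$, justified by ``an $M$-unsaturated vertex \ldots\ must itself be the root'', is false. Take $G$ the path $s_1\text{--}r_1\text{--}s_2\text{--}r_2\text{--}s_3$ with $s_i\in S$, $r_j\in R$, rooted at $s_1$: one computes $\betadown(s_2)=2$ but $\beta(s_2)=3$, and the unique $\beta$-matching is $\{r_1s_1,r_2s_3\}$, leaving the non-root $s_2$ unsaturated. Fortunately this detour through $\betadown$ and $\wdown$ is unnecessary: you already have from (a),(b) that $b_u$ has exactly $\beta(u)$ nonzeros, and Lemma~\ref{lem:w>=beta} gives $w(u)\geq\beta(u)$ directly, so $b_u$ is sparsest among vectors nonzero at $u$. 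This is precisely what the paper does; just delete (c).

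Second, ordering $U$ ``arbitrarily'' does not suffice for Theorem~\ref{thm:greedy}. A vector $z$ outside the span of $\{b_{u_1},\ldots,b_{u_{i-1}}\}$ is nonzero at some $u_j$ with $j\geq i$, hence has at least $w(u_j)\geq\beta(u_j)$ nonzeros; but you need this to be at least the number of nonzeros of $b_{u_i}$, namely $\beta(u_i)$. This only follows if the $u_i$ are ordered so that $\beta(u_1)\leq\cdots\leq\beta(u_d)$, which is exactly the ordering the paper uses. Your instinct on the running time is sound (and arguably more careful than the paper, which asserts $N\geq n$ without comment), but the cleaner bookkeeping is simply $O(n+N)$ with the paper's one-line claim that $n\leq N$ covering the interesting case.
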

\begin{proof} By Theorem~\ref{thm:B}, $B_G=\{b_u\vert_{V(G)}\colon\,u\in U\}$ (where $b_u\vert_{V(G)}$ denotes the restriction of $b_u$ to $V(G)$) is a $\{-1,0,1\}$-null basis of $\N(G)$ and, for each $u\in U$, $b_u$ is the only vector in $B$ that is nonzero at $u$. Notice that, by construction, $b_u\vert_{V(G)}\in\NS(G)$ and thus $B_G$ is a basis of $\NS(G)$. It is straightforward to verify that the mapping $f:\NS(G)\to\N(F)$ that assigns to each $z\in\NS(G)$ the vector $f(z)$ that arises from $z$ by extending it with the value $0$ at each vertex of $V(F)-V(G)$ is well-defined (i.e., that indeed $z'\in\N(F)$) and that $f$ is an injective linear map. Thus, as $b_u=f(b_u\vert_{V(G)})$ for each $u\in U$, $B$ is a set of linearly independent vectors of $\N(F)$. Moreover, as Theorem~\ref{thm:dT=S-R} ensures that $\dim\N(F)=\vert S\vert-\vert R\vert=\vert U\vert$, it turns out that $B$ is a basis of $\N(F)$ and the mapping $f$ is an isomorphism. Hence, the definition of $w$ implies that, for each $x\in S$, $w(x)$ is also the minimum number of nonzeros among the vectors in $\N(F)$ that are nonzero at $x$. Therefore, for each $u\in U$, $b_u$ is sparsest among the vectors in $\N(F)$ that are nonzero at $u$ because, by construction, the number of nonzeros of $b_u$ is $\vert Q^+(u)\vert+\vert Q^-(u)\vert=\beta(u)\leq w(u)$ (by Lemma~\ref{lem:q=beta} and Lemma~\ref{lem:w>=beta}, respectively).

We label the vertices of $U$ in such a way that $U=\{u_1,\ldots,u_d\}$ and the numbers of nonzeros in $b_{u_1},\ldots,b_{u_d}$ is nondecreasing. We claim that: \emph{for each $i\in\{1,\ldots,d\}$, $b_{u_i}$ is sparsest among the vectors in $\N(F)$ that are not in the subspace generated by $\{b_{u_1},\ldots,b_{u_{i-1}}\}$}. In order to prove the claim, let $i\in\{1,\ldots,d\}$ and let $z\in\N(F)$ such that $z$ is not in the subspace generated by $\{b_{u_1},\ldots,b_{u_{i-1}}\}$. As each $b_u$ is the only vector in the basis $B$ that is nonzero at $u$ and $z$ is not in the subspace generated by $\{b_{u_1},\ldots,b_{u_{i-1}}\}$, it follows that $z$ is nonzero at some vertex in $U$ but zero at each of $u_1,\ldots,u_{i-1}$. Thus, $z$ is nonzero at $u_j$ for some $j\geq i$. As $b_{u_j}$ is sparsest among the vectors in $\N(F)$ that are nonzero at $u_j$, $z$ has at least as many nonzeros as $b_j$, which in turn has at least as many nonzeros as $b_i$. This proves the claim and, by virtue of Theorem~\ref{thm:greedy}, $B$ is a sparsest null basis of $F$.

It only remains to prove the running time bound. Suppose $F$ is given. By Lemma~\ref{lem:beta-matching-O(n)}, a $\beta$-matching can be found in $O(n)$ time. Then, by Theorem~\ref{thm:B}, we can compute $B$ in additional $O(N)$ time, where $N$ is the number of nonzeros of $B$; i.e., the number of nonzeros in any sparsest null basis of $F$. As $N\geq n$, these two steps together take $O(N)$ total time.\end{proof}

\begin{cor} Given $F$, the number of nonzeros in any sparsest null basis of $F$ can be found in $O(n)$ time.\end{cor}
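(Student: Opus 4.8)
The plan is to extract the number $N$ of nonzeros in any sparsest null basis of $F$ without actually constructing that basis, by expressing $N$ purely in terms of the function $\beta$ on the forest $G$. By Theorem~\ref{thm:main}, a sparsest null basis is given by $B=\{b_u\colon u\in U\}$ where $M$ is any $\beta$-matching of $G$ and $U$ is the set of $M$-unsaturated vertices of $G$; moreover $N=\sum_{u\in U}\left(\vert Q^+(u)\vert+\vert Q^-(u)\vert\right)=\sum_{u\in U}\vert Q(u)\vert$, where $Q(u)=Q^+(u)\cup Q^-(u)$ is the set of vertices reachable from $u$ in $G$ by $M$-alternating paths of even length. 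By Lemma~\ref{lem:q=beta}, $\vert Q(u)\vert=\beta(u)$ for each such $u$, so $N=\sum_{u\in U}\beta(u)$.

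First I would invoke Lemma~\ref{lem:beta-matching-O(n)}: given $F$, the values $\beta(x)$ for every $x\in V(G)$ and a $\beta$-matching $M$ can be computed in $O(n)$ time. From $M$ one reads off the set $U$ of $M$-unsaturated vertices of $G$ in $O(n)$ time (every vertex of $R$ is $M$-saturated since a $\beta$-matching saturates all of $R$, so $U\subseteq S$ and $U$ consists of those vertices of $S$ not covered by any edge of $M$). Then I would simply sum $\beta(u)$ over $u\in U$, which is $O(\vert U\vert)=O(n)$ additional work. Summing this all up, the total running time is $O(n)$, and by the discussion above the resulting number equals $N$, the number of nonzeros in any sparsest null basis of $F$ (this quantity is well-defined precisely because all sparsest bases have the same total number of nonzeros, a consequence of Theorem~\ref{thm:greedy}).

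The only genuinely substantive point — and the step I expect to need the most care — is the identity $N=\sum_{u\in U}\beta(u)$, i.e., justifying that the running time of the full basis-construction algorithm ($O(N)$ by Theorem~\ref{thm:main}) can be separated into an $O(n)$ ``counting'' part and an $O(N)$ ``writing'' part, so that dropping the writing part leaves $O(n)$. Concretely, one must observe that $Q^+(u)$ and $Q^-(u)$ partition $Q(u)$ and that $\vert Q(u)\vert=\beta(u)$ by Lemma~\ref{lem:q=beta}, hence $N=\sum_{u\in U}\beta(u)$; everything else is bookkeeping already supplied by Lemma~\ref{lem:beta-matching-O(n)}. There is no real obstacle here beyond assembling these already-established facts in the right order.

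\begin{proof}
Suppose $F$ is given. By Lemma~\ref{lem:beta-matching-O(n)}, in $O(n)$ time we can compute a $\beta$-matching $M$ of $G$ together with the values $\beta(x)$ for every $x\in V(G)$. By Lemma~\ref{lem:beta-matching}, $M$ is a maximum matching of $G$; in particular, since every edge of $G$ has an endpoint in $R$ and $M$ saturates all of $R$, the set $U$ of $M$-unsaturated vertices of $G$ is contained in $S$ and can be determined in $O(n)$ time from $M$. By Theorem~\ref{thm:main} (applied with this $M$), the set $B=\{b_u\colon u\in U\}$ defined there is a sparsest null basis of $F$, and $N$, the number of nonzeros in any sparsest null basis of $F$, equals the total number of nonzeros of the vectors in $B$; that is, using that $Q^+(u)$ and $Q^-(u)$ are disjoint with union $Q(u)$,
\[ N=\sum_{u\in U}\left(\vert Q^+(u)\vert+\vert Q^-(u)\vert\right)=\sum_{u\in U}\vert Q(u)\vert. \]
By Lemma~\ref{lem:q=beta}, $\vert Q(u)\vert=\beta(u)$ for each $u\in U$, hence $N=\sum_{u\in U}\beta(u)$. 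This last sum can be evaluated in $O(\vert U\vert)=O(n)$ time from the already-computed values of $\beta$. Therefore, $N$ can be found in $O(n)$ total time.
\end{proof}
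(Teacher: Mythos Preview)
Your proof is correct and follows essentially the same route as the paper: identify $N=\sum_{u\in U}\beta(u)$ via Lemma~\ref{lem:q=beta} and Theorem~\ref{thm:main}, then appeal to Lemma~\ref{lem:beta-matching-O(n)} to compute the $\beta$-values and a $\beta$-matching (hence $U$) in $O(n)$ time. The paper's version is slightly terser but the argument is the same.
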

\begin{proof} Let $M$ be a $\beta$-matching and $B=\{b_u\colon\,u\in U\}$ be the sparsest null basis of $F$ considered in Theorem~\ref{thm:main}. As each $b_u$ has $\beta(u)$ nonzeros by Lemma~\ref{lem:q=beta}, the total number $N$ of nonzeros in $B$ is $\sum_{u\in U}\beta(u)$. As the values of $\beta$ for each $x\in V(G)$ and a $\beta$-matching can be computed in $O(n)$ time by Lemma~\ref{lem:beta-matching-O(n)}, $N$ can be found in $O(n)$ time.\end{proof}

\section*{Acknowledments}

This work was partially supported by the ``Red Argentino-Brasile\~na de Teor\'ia Algebraica y Algor\'itmica de Grafos, Etapa 2016'' (SPU-ME, Argentina) and SiDIU (UNSL). D.A.\ Jaume, G.\ Molina, and A.\ Pastine were partially supported by Universidad Nacional de San Luis, Grant PROIPRO 03-2216. M.D.~Safe was partially supported by ANPCyT PICT 2015-2218, CONICET PIO 14420140100027CO, and UNS PGI 24/ZL16.

\end{document}